\documentclass[12pt,reqno]{amsart}
 
 \setlength{\topmargin}{0.13in} \setlength{\textheight}{8.85in}
\setlength{\textwidth}{5.8in} \setlength{\oddsidemargin}{0.2in}
\setlength{\evensidemargin}{0.2in}

\setcounter{tocdepth}{2}
\usepackage{amsmath,amsthm,amssymb,mathrsfs,amscd,epsfig,color}
\usepackage{url}
\usepackage{graphicx}
\usepackage{mathrsfs}
\usepackage{ulem}
\usepackage{fancybox}
\usepackage{pb-diagram} 
\usepackage[all]{xy}
\usepackage{comment}
\usepackage{mathtools}
\usepackage{centernot}
\usepackage{placeins} 
\usepackage{url}

\makeatletter

\newcommand{\confrac}[2]{%
  \frac{\displaystyle{%
    \strut\hfill{#1}\hfill\;\vrule}}%
      {\displaystyle{%
       \strut\vrule\;\hfill{#2}\hfill}}}%

\makeatletter

    \newcommand\contFrac{\@ifstar{\@contFracStar}{\@contFracNoStar}}

   \def\singleContFrac#1#2{%
        \begin{array}{@{}c@{}}%
            \multicolumn{1}{c|}{#1}%
            \\%
            \hline%
           \multicolumn{1}{|c}{#2}%
        \end{array}%
   }

    \def\@contFracNoStar#1{%
        \mathchoice{
            \@contFracNoStarDisplay@#1//\@nil%
        }{
            \@contFracNoStarInline@#1//\@nil%
        }{
            \@contFracNoStarInline@#1//\@nil%
        }{
            \@contFracNoStarInline@#1//\@nil%
        }%
    }

    \def\@contFracNoStarDisplay@#1//#2\@nil{%
        \@ifmtarg{#2}{%
            #1%
        }{%
            #1+\cfrac{1}{\@contFracNoStarDisplay@#2\@nil}%
        }%
    }

        \def\@contFracNoStarInline@#1//#2\@nil{%
            \@ifmtarg{#2}{%
                #1%
            }{%
                #1 \@@contFracNoStarInline@@#2\@nil%
            }%
        }
        \def\@@contFracNoStarInline@@#1//#2\@nil{%
            \@ifmtarg{#2}{%
                + \singleContFrac{1}{#1}%
            }{%
                + \singleContFrac{1}{#1} \@@contFracNoStarInline@@#2\@nil%
            }%
        }

    \def\@contFracStar#1{%
        \mathchoice{
            \@contFracStarDisplay@#1////\@nil%
        }{
            \@contFracStarInline@#1//\@nil%
        }{
            \@contFracStarInline@#1//\@nil%
        }{
            \@contFracStarInline@#1//\@nil%
        }%
    }

    \def\@contFracStarDisplay@#1//#2//#3\@nil{%
        \@ifmtarg{#2}{%
            #1%
        }{%
            #1 + \cfrac{#2}{\@contFracStarDisplay@#3\@nil}%
        }%
    }

        \def\@contFracStarInline@#1//#2\@nil{%
            \@ifmtarg{#2}{%
                #1%
            }{%
                #1 \@@contFracStarInline@@#2\@nil%
            }%
        }
        \def\@@contFracStarInline@@#1//#2//#3\@nil{%
            \@ifmtarg{#3}{%
                + \singleContFrac{#1}{#2}%
            }{%
                + \singleContFrac{#1}{#2} \@@contFracStarInline@@#3\@nil%
            }%
        }
\makeatother

       \numberwithin{equation}{section}
\theoremstyle{plain}

\newtheorem{thm}{Theorem}[section]
\newtheorem{lem}[thm]{Lemma}

\newtheorem{pro}[thm]{Proposition}
\theoremstyle{definition}

\newtheorem*{prf*}{Proof}

\newtheorem*{pf*}{}
\newtheorem*{lem*}{LemmaA}
\newtheorem*{lm*}{LemmaB}
\makeatletter
\@namedef{subjclassname@2020}{%
  \textup{2020} Mathematics Subject Classification}
\makeatother
\title[A problem of Hirst and sets with restricted slowly growing digits]
{A problem of Hirst for the Hurwitz continued fraction 
and the Hausdorff dimension of sets with restricted slowly growing digits}
\author{Yuto Nakajima and Hiroki Takahasi}


%
\address{Faculty of Science and Engineering, Doshisha University, Kyoto, 610-0394, JAPAN}
\email{yunakaji@mail.doshisha.ac.jp}
\address{Keio Institute of Pure and Applied Sciences (KiPAS), 
Department of Mathematics,
Keio University, Yokohama,
223-8522, JAPAN} 
\email{hiroki@math.keio.ac.jp}

\subjclass[2020]{11A55, 11K50, 30B70}
\thanks{{\it Keywords}: 
 Hurwitz continued fraction; Hausdorff dimension; iterated function system (IFS)}
\begin{document}

\begin{abstract} 
We address the problem of determining the Hausdorff dimension of sets consisting of complex irrationals whose complex continued fraction digits satisfy  
  prescribed restrictions and growth conditions. For the Hurwitz continued fraction, 
  we confirm Hirst's conjecture, 
  as a complex analogue of the result of Wang and Wu [Bull. Lond. Math. Soc. {\bf 40} (2008), no. 1, 18--22] for the regular continued fraction.
We also prove a complex analogue   
of the second-named author's result on the Hausdorff dimension of sets with restricted slowly growing digits [Proc. Amer. Math. Soc. {\bf 151} (2023), no. 9, 3645--3653]. 
To these ends, we 
exploit an infinite conformal iterated function system associated with the Hurwitz continued fraction.
\end{abstract}

\maketitle
\section{Introduction}
Continued fractions of real and complex numbers can provide good rational approximations to irrational numbers, and
various aspects of irrational numbers are understood by means of their continued fraction expansions. 
Conversely, sets of irrationals 
whose continued fraction digits satisfy certain prescribed conditions often become fractal sets.
A principal aim of the dimension theory of continued fractions 
is to determine the Hausdorff dimension of these fractals.
A pioneering result  in this direction
is due to Jarn\'ik \cite{Jarnik1}
for the regular continued fraction  
 \[x=\confrac{1 }{a_{1}(x)} + \confrac{1 }{a_{2}(x)}+ \confrac{1 }{a_{3}(x)}  +\cdots,\]
where $x\in(0,1)$ is an irrational and $a_n(x)\in\mathbb N$ for every $n\geq1$. It is well-known that $x$ is badly approximable if and only if the sequence $(a_n(x))_{n\in\mathbb N}$ is bounded. In \cite{Jarnik1},
Jarn\'ik proved that
the set of badly approximable numbers in $(0,1)$ is of Hausdorff dimension $1$.
On the basis of Jarn\'ik's techniques, Good 
\cite[Theorem~1]{Good1941} proved that the set of irrationals in $(0,1)$ 
whose regular continued fraction digits diverge to infinity is of Hausdorff dimension $1/2$.

As a refinement of Good's theorem, Hirst \cite{Hir73} considered cases where digits
 are restricted to belong to some subset of $\mathbb N$.
 For an infinite subset $B$ of $\mathbb N$,
 he introduced the set
 \[E(B)=\left\{x\in(0,1)\setminus\mathbb Q\colon a_n(x)\in B\text{ for every } n\in\mathbb N\text{ and } \lim_{n\to\infty}a_n(x)=\infty\right\},\]
 and conjectured that for an arbitrary $B$, $E(B)$ is of Hausdorff dimension
  $\tau(B)/2$ where $\tau(B)$ denotes the {\it convergence exponent} given by
  \[\tau(B)=\inf\left\{t\geq0\colon\sum_{k\in B} k^{-t}<\infty\right\}.\] Hirst indeed proved the upper bound for an arbitrary $B$, and proved the lower bound in
 the special case $B=\{k^b\}_{k\in\mathbb N}$, $b\in\mathbb N$
 (see \cite[Theorem~3]{Hir73}).
 Cusick \cite[Theorem~1]{Cus90} proved the lower bound 
 under the following density assumption on $B$ \cite[p.280]{Cus90}: there exist positive constants $c$, $q$ and $r$ such that $r<q\tau(B)$, and for all real numbers $p\geq q$, 
  $\#(B\cap[(n-1)^p,n^p])\geq cn^{p\tau(B)-r}$ for every $n\in\mathbb N$.
   Hirst's conjecture was confirmed
   by
 Wang and Wu \cite[Theorem~1.1]{WW08} without any assumption on $B$, 35 years later than the appearance of Hirst's paper \cite{Hir73}. Hirst's conjecture was then formulated and confirmed in several different setups:
 continued fractions over the field of Laurent series \cite{HuWu09};  infinite iterated function systems on compact intervals \cite{CWW13};  generalized continued fractions \cite{TT16}.





One ramification of Hirst's conjecture is a problem to determine the Hausdorff dimension of subsets of $E(B)$ with prescribed slowly growing speeds of digits, namely sets of the form
 \[G(B,f)=\left\{x\in E(B)\colon a_n(x)\leq f(n)\text{ for every }n\in\mathbb N\right\},\]
 where $f$ is a function taking values in $[\min B,\infty)$ and satisfying $\lim_{n\to\infty}f(n)=\infty$.  
It was proved in \cite[Theorem~1.1]{T23} that $G(B,f)$ is of Hausdorff dimension $\tau(B)/2$ for any such $f$.
This result was generalized to 
 infinite iterated function systems on compact intervals \cite{GHST},
 and to semi-regular continued fractions \cite{NT}.

It is natural to ask for analogous dimension results for complex continued fractions, i.e., continued fractions with digits in complex numbers. Since the work of Adolf Hurwitz \cite{H87} in the 19th century, there have been a number of attempts to define good continued fraction expansions for complex numbers, 
see e.g., \cite{DN,H1902,KST76,Lak73,LeV52,S75}. 
Extending results for the regular continued fraction to complex continued fractions is in general desirable, but not always possible, see e.g., \cite[Section~4]{Lak75}.
The aim of this paper is to confirm Hirst's conjecture for the Hurwitz continued fraction (Theorem~\ref{Main1}), and prove an analogue of the result \cite[Theorem~1.1]{T23} on the dimension of sets with restricted slowly growing digits 
for the Hurwitz continued fraction (Theorem~\ref{Main2}).

\subsection{Statements of the results}
Let $\mathbb Q(\sqrt{-1})$ denote the Gaussian field and let 
$\mathbb Z(\sqrt{-1})$ denote the ring of Gaussian integers, namely
\[\mathbb Q(\sqrt{-1})=\{x+\sqrt{-1}y\in\mathbb C\colon x,y\in\mathbb Q\}\] 
and
\[\mathbb Z(\sqrt{-1})=\{x+\sqrt{-1}y\in\mathbb C\colon x,y\in\mathbb Z\}.\]
For convenience we identify $\mathbb C$ with $\mathbb R^2$ in the obvious way. Then $\mathbb Z(\sqrt{-1})$ is identified  with $\mathbb Z^2$.
Let \[U=\left\{ x+\sqrt{-1}y\in \mathbb C\colon -\frac{1}{2}\le x<\frac{1}{2},\ -\frac{1}{2}\leq y< \frac{1}{2}\right\}.\]
For $x\in\mathbb R$, let $\lfloor x\rfloor$ denote the largest integer not exceeding $x$.
For $x,y\in\mathbb R$ and $z=x+\sqrt{-1}y\in \mathbb C$ we define \[\lfloor z\rfloor=\left\lfloor x+\frac{1}{2}\right\rfloor+\sqrt{-1} \left\lfloor y+\frac{1}{2}\right\rfloor,\]
which is one of the at most four Gaussian integers that are nearest to $z$ in the Euclidean metric.
Define the {\it Hurwitz map} $H\colon U\setminus\{0\}\rightarrow U$ by \begin{equation}\label{H-map}H(z)=\frac{1}{z}-\left\lfloor \frac{1}{z}\right\rfloor.\end{equation}
For each complex number $z\in U\setminus\{0\}$, define a sequence $c_1(z)$, $c_2(z),\ldots$ of nonzero Gaussian integers 
inductively by 
\[c_{n}(z)=\left\lfloor \frac{1}{H^{n-1}(z)}\right\rfloor.\]
If $H^{n}(z)=0$ for some $n\geq1$ then $c_{n+1}(z)$ is not defined.
We have $H^n(z)\neq0$ for every $n\geq1$
if and only if $z\in U\setminus\mathbb Q(\sqrt{-1})$, and in this case $z$
   has the unique infinite expansion of the form 
\begin{equation}\label{H-exp}z=\confrac{1 }{c_{1}(z)} + \confrac{1 }{c_{2}(z)}+ \confrac{1 }{c_{3}(z)}  +\cdots,\end{equation}
see \cite[Theorem~6.1]{DN} for example.
The expansion of $z$ in \eqref{H-exp} is called the
 {\it Hurwitz continued fraction expansion} \cite{H87}.

Substantial dimension results are emerging for the Hurwitz continued fraction.
Generalizing Jarn\'ik's techniques, Gonz\'alez Robert \cite{Ger} proved that the set of complex irrationals whose Hurwitz continued fraction digits diverge to infinity is of Hausdorff dimension $1$. 
Bugeaud et al. \cite{BGH25} obtained a comprehensive dimension result as a complex analogue of the result of Wang and Wu \cite{WW08b} for the regular continued fraction.
Our first result confirms Hirst's conjecture, 
a complex analogue of \cite[Theorem~1.1]{WW08}.
For a subset $S$ of $\mathbb Z(\sqrt{-1})$, 
let $|S|=\{|z|\colon z\in S\}$. 
\begin{thm}
\label{Main1}
For any infinite subset $S$ of $\mathbb Z(\sqrt{-1})$, the set
\[F(S)=\left\{z\in U\setminus\mathbb Q(\sqrt{-1})\colon c_n(z)\in S\text{ for every $n\in\mathbb N$}\text{ and }\lim_{n\to\infty}|c_n(z)|=\infty\right\}\]
is of Hausdorff dimension $\tau(|S|)/2$.
\end{thm}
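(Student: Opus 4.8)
The plan is to realize $F(S)$ via the infinite conformal iterated function system furnished by the inverse branches of the Hurwitz map. For each admissible digit $c$ (a nonzero Gaussian integer that actually occurs as a value of $c_1$) set $\phi_c(w)=1/(c+w)$, so that $\phi_c$ maps $U$ into itself and $c_n$ of $\phi_{c_1}\circ\cdots\circ\phi_{c_n}(w)$ equals $c_n$. These maps are conformal with uniformly bounded distortion, and since $w$ ranges over the bounded set $U$ and $|c|\ge\sqrt2$ one has $\|\phi_c'\|_\infty\asymp|c|^{-2}$. Because the set of admissible digits is cofinite in $\mathbb Z(\sqrt{-1})$ and $\tau$ is insensitive to finite modifications, I may assume every element of $S$ is admissible. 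The first, routine, step records that the convergence exponent of $|S|$ governs the derivative sums: writing $r_2$ for the number of representations of an integer as a sum of two squares and using $r_2(m)=O(m^\varepsilon)$, the multiplicity of a given modulus is negligible, so $\inf\{t\colon\sum_{c\in S}|c|^{-t}<\infty\}=\tau(|S|)$, and hence $\sum_{c\in S}\|\phi_c'\|_\infty^{s}<\infty$ precisely when $s>\tau(|S|)/2$.

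For the upper bound I would use the divergence condition directly. Fix $M$ and let $J_{\ge M}$ be the limit set of the subsystem with digit set $\{c\in S\colon|c|\ge M\}$. Since $|c_n(z)|\to\infty$, every $z\in F(S)$ lies, after deleting finitely many initial digits, in a cylinder image $\phi_{c_1}\circ\cdots\circ\phi_{c_{n_0}}(J_{\ge M})$; these maps are bi-Lipschitz on a neighbourhood of $J_{\ge M}$ and there are only countably many such cylinders, so $\dim_H F(S)\le\dim_H J_{\ge M}$. If $s>\tau(|S|)/2$, then $\sum_{|c|\ge M}\|\phi_c'\|_\infty^{s}<1$ for $M$ large; bounded distortion turns this into $\sum_{|\omega|=n}(\operatorname{diam}\phi_\omega(U))^{s}\le C q^{n}$ with $q<1$, giving covers of arbitrarily small mesh and bounded total $s$-content, whence $\dim_H J_{\ge M}\le s$. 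Letting $M\to\infty$ and then $s\downarrow\tau(|S|)/2$ yields $\dim_H F(S)\le\tau(|S|)/2$.

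The lower bound is the crux. Fix $s<\tau(|S|)/2$, so $\sum_{c\in S}|c|^{-2s}=\infty$, a divergence that survives deletion of any finite set of digits. Thus for each $M$ the tail sum $\sum_{|c|\ge M,\,c\in S}(\inf_U|\phi_c'|)^{s}$ is infinite, and the elementary pressure estimate $P_F(s)\ge\log\sum_{c\in F}(\inf_U|\phi_c'|)^{s}$ for a finite subsystem $F$ provides a \emph{finite} digit set $F_M\subseteq\{c\in S\colon|c|\ge M\}$ with $\dim_H J_{F_M}>s$. I would then choose $M_1<M_2<\cdots\to\infty$, put $F_n:=F_{M_n}$, and build an inhomogeneous Moran subset $E\subseteq F(S)$ by permitting only digits from $F_n$ on a long block of consecutive levels of length $k_n$, with $k_n\to\infty$. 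As $\min_{c\in F_n}|c|\ge M_n\to\infty$, every point of $E$ satisfies $|c_j|\to\infty$ with all digits in $S$, so $E\subseteq F(S)$. Placing the corresponding conformal measure on each block and patching them into a single measure $\mu$, a cylinder spanning several complete blocks plus a partial one has $\mu$-mass comparable to $(\operatorname{diam})^{\,\text{weighted average of the }s_n}$, the weights being block lengths; since each $s_n=\dim_H J_{F_n}>s$ and the number of block boundaries up to generation $N$ is $o(N)$, the local exponent tends to a value $\ge s$. The mass distribution principle then gives $\dim_H E\ge s$, and letting $s\uparrow\tau(|S|)/2$ finishes the proof.

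I expect the last construction to be the main obstacle, for two reasons. First, the Hurwitz expansion is combinatorially constrained—not every concatenation of admissible digits is realized—which I plan to circumvent by drawing the $F_n$ from digits of sufficiently large modulus, where concatenation is unobstructed, or by passing to a subshift of finite type without loss of dimension. Second, converting the symbolic mass estimate into a genuine Frostman bound $\mu(B(z,r))\lesssim r^{s}$ requires comparing round balls in $\mathbb C$ with the cylinder sets $\phi_\omega(U)$; here the planar conformality and uniform bounded distortion guarantee that cylinders are roughly round and boundedly overlapping, and $k_n\to\infty$ ensures the multiplicative error incurred at each block transition accumulates to something negligible on the logarithmic scale. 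It is precisely at these two points that the conformal geometry of $\{\phi_c\}$ and the growth of the block lengths are essential.
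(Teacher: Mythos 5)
Your proposal is correct, and its overall architecture coincides with the paper's: the same IFS $\phi_c(z)=1/(z+c)$ restricted to digits of large modulus (the paper's alphabet $\mathbb D_2=\{(k,\ell)\colon k^2+\ell^2\geq 8\}$, for which cylinders are full, so the concatenation obstruction you flag is resolved exactly as in the paper's (H2)--(H5) and Lemma~\ref{cylind-lem}); the same covering argument for the upper bound (Proposition~\ref{upper}), including the reduction of $F(S)$ to countably many bi-Lipschitz images of the tail system; and the same lower-bound skeleton, namely blocks of digits of growing modulus applied over stretches of length $k_n\to\infty$. The one genuine divergence is how the lower bound is concluded. The paper packages the block construction as a non-autonomous conformal IFS, takes annular blocks $S_m$ normalized by $\sum_{i\in S_m}|i|^{-\tau(|S|)+\varepsilon}\geq 1$ (its \eqref{aaaa}), bounds the lower pressure at $s=(\tau(|S|)-\varepsilon)/(2+\delta)$, and then cites the non-autonomous Bowen formula of Rempe-Gillen and Urba\'nski (Theorem~\ref{Bowen}); you instead select finite subsystems $F_n$ of dimension exceeding $s$ via the elementary pressure bound and prove the estimate by hand with a patched conformal measure and the mass distribution principle. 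Your route is self-contained and avoids citing \cite{RU}, at the cost of re-deriving the Moran-set machinery; the paper's route is shorter but must verify the hypotheses of Theorem~\ref{Bowen}, notably subexponential boundedness, which is precisely the analogue of your requirement that block-transition errors be logarithmically negligible. On that point one refinement is needed: $k_n\to\infty$ alone does not suffice, since the per-transition constants (governed by $\#F_n$ and $\min_{c\in F_n}\inf_U|D\phi_c|$) grow with $n$; the $k_n$ must be chosen inductively, large relative to the data of $F_n$, exactly as the paper's conditions \eqref{hutou}--\eqref{hutou2} force $\log\#S_m/\sum_{j\le m}t_j\to 0$ --- harmless in your scheme, since you pick $k_n$ after $F_n$. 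Finally, your observation that $r_2(m)=O(m^{\varepsilon})$ is what lets one pass between sums over $c\in S$ and sums over the set of moduli $|S|$ is correct and is a point the paper leaves implicit in \eqref{upper-eq1}.
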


Computing the convergence exponent of the set $|\mathbb Z(\sqrt{-1})|$ and applying Theorem~\ref{Main1} to the case $S=\mathbb Z(\sqrt{-1})$ yields an alternative proof of  Gonz\'alez Robert's extension \cite{Ger} of Good's theorem \cite[Theorem~1]{Good1941} to the Hurwitz continued fraction.
\begin{thm}[{\cite[Theorem~1.3]{Ger}}]\label{Main-cor} The set
\[\left\{z\in U\setminus\mathbb Q(\sqrt{-1})\colon \lim_{n\to\infty}|c_n(z)|=\infty\right\}\]
is of Hausdorff dimension $1$.
\end{thm}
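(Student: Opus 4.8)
The plan is to derive Theorem~\ref{Main-cor} as a special case of Theorem~\ref{Main1}, taking $S$ to be the set of all nonzero Gaussian integers and then computing the resulting convergence exponent.

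First I would set $S=\mathbb Z(\sqrt{-1})\setminus\{0\}$. Because every Hurwitz digit $c_n(z)$ is by its very definition a nonzero Gaussian integer, the membership constraint $c_n(z)\in S$ for all $n\in\mathbb N$ holds automatically for every $z\in U\setminus\mathbb Q(\sqrt{-1})$. Consequently $F(S)$ coincides exactly with the set appearing in Theorem~\ref{Main-cor}, and Theorem~\ref{Main1} gives at once $\dim_H F(S)=\tau(|S|)/2$. The entire matter thus reduces to showing $\tau(|S|)=2$.

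To evaluate $\tau(|S|)$ I would describe the moduli set explicitly. A positive real $r$ belongs to $|S|$ if and only if $r=\sqrt m$ for some $m$ in the set $A=\{a^2+b^2\colon (a,b)\in\mathbb Z^2\setminus\{(0,0)\}\}$ of positive integers that are sums of two squares, and the correspondence $m\mapsto\sqrt m$ is a bijection from $A$ onto $|S|$. Hence the defining series takes the form $\sum_{r\in|S|}r^{-t}=\sum_{m\in A}m^{-t/2}$. The upper bound $\tau(|S|)\le2$ is immediate from the crude comparison $A\subseteq\mathbb N$, which gives $\sum_{m\in A}m^{-t/2}\le\zeta(t/2)<\infty$ whenever $t>2$. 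For the lower bound $\tau(|S|)\ge2$ it suffices to show that the series diverges at $t=2$, and here I would invoke two classical facts: by Fermat's two-square theorem every prime $p\equiv1\pmod4$ lies in $A$, while by Dirichlet's theorem $\sum_{p\equiv1\,(\mathrm{mod}\,4)}p^{-1}=\infty$; together these give $\sum_{m\in A}m^{-1}\ge\sum_{p\equiv1\,(\mathrm{mod}\,4)}p^{-1}=\infty$.

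Putting the two bounds together yields $\tau(|S|)=2$, and therefore $\dim_H F(S)=\tau(|S|)/2=1$, which is Theorem~\ref{Main-cor}. The only step with any content is the convergence-exponent computation, and within it the lower bound is the point requiring care: the inclusion $A\subseteq\mathbb N$ bounds the series only from above, and since $A$ has density zero in $\mathbb N$ one cannot argue by comparison with the full harmonic series but must instead isolate a subfamily of $A$ dense enough to force divergence at the critical exponent. The primes congruent to $1$ modulo $4$ furnish such a subfamily; alternatively, the Landau--Ramanujan estimate $\#\{m\in A\colon m\le X\}\asymp X/\sqrt{\log X}$ yields the same divergence by partial summation.
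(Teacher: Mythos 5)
Your proposal is correct, and its first step is exactly the paper's: apply Theorem~\ref{Main1} with $S$ the (nonzero) Gaussian integers, note that the digit constraint is then vacuous, and reduce everything to showing $\tau(|S|)=2$. Where you genuinely diverge is in that computation. The paper proceeds by elementary lattice-point counting: enumerating $\mathbb Z(\sqrt{-1})=\{z_n\}$ with $|z_1|\le|z_2|\le\cdots$, it traps $z_n$ between the squares $[-N,N]^2$ and $[-N-1,N+1]^2$ to get $N<|z_n|\le\sqrt2(N+1)$ when $(2N+1)^2<n\le(2N+2)^2$, hence $|z_n|\asymp n^{1/2}$, and then invokes the P\'olya--Szeg\H{o} formula $\tau=\limsup_n(\log n/\log|z_n|)=2$. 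You instead pass to the set $A$ of integers that are sums of two squares, get the upper bound $\tau(|S|)\le2$ from comparison with $\zeta(t/2)$, and the lower bound from Fermat's two-square theorem plus the divergence of $\sum_{p\equiv1\ (\mathrm{mod}\ 4)}p^{-1}$. Your route imports nontrivial arithmetic input (Fermat, Dirichlet, or Landau--Ramanujan) where the paper's counting argument is entirely self-contained; on the other hand, your version is scrupulous about a point the paper glosses over: $\tau(|S|)$ as defined sums over \emph{distinct} moduli, whereas the paper's enumeration of $\mathbb Z(\sqrt{-1})$ repeats each modulus $\sqrt m$ with multiplicity $r_2(m)$, and identifying the resulting exponent with $\tau(|S|)$ tacitly uses that these multiplicities grow subpolynomially (with multiplicity one only gets $\tau(|S|)\le2$ for free). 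Your lower bound via primes $p\equiv1\pmod 4$ is immune to this issue, since distinct primes give distinct elements of $A$, so the divergence at the critical exponent $t=2$ holds for the multiplicity-free series, which is exactly what the definition of $\tau$ requires.
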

We now move on to sets with restricted slowly growing digits. 
Our second result is a complex analogue of \cite[Theorem~1.1]{T23}.

\begin{thm}
\label{Main2}
For any infinite subset $S$ of $\mathbb Z(\sqrt{-1})$ and any function $f\colon\mathbb{N}\to [\min |S|, \infty)$ satisfying $\lim_{n\to \infty}f(n)=\infty$,   
the set
\[F(S, f)=\{z\in F(S)\colon |c_n(z)|\leq f(n)\text{ for every }n\in\mathbb N\}\]
is of Hausdorff dimension $\tau(|S|)/2$.
\end{thm}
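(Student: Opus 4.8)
The plan is to establish the two inequalities separately. The upper bound $\dim_H F(S,f)\le \tau(|S|)/2$ is immediate: since $F(S,f)\subseteq F(S)$, it follows at once from Theorem~\ref{Main1}. All the content therefore lies in the matching lower bound, and it suffices to prove $\dim_H F(S,f)\ge s$ for an arbitrary fixed $s$ with $0<s<\tau(|S|)/2$ and then let $s\uparrow\tau(|S|)/2$. Throughout I would work with the infinite conformal IFS $\{\phi_c\}_{c\in S}$ given by the inverse branches of the Hurwitz map, using the bounded distortion property together with the estimate $\|\phi_c'\|\asymp|c|^{-2}$, so that the diameter of a cylinder $[c_1,\dots,c_n]$ is comparable to $\prod_{i=1}^n|c_i|^{-2}$. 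I would also invoke the number-theoretic fact (via the divisor bound $r_2(n)=n^{o(1)}$, presumably recorded for Theorem~\ref{Main1}) that $\sum_{c\in S}|c|^{-t}$ and $\sum_{m\in|S|}m^{-t}$ share the convergence exponent $\tau(|S|)$; in particular $\sum_{c\in S}|c|^{-2s}=\infty$ for our $s$.

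For the lower bound I would carry out a non-autonomous Cantor (Moran) construction inside $F(S,f)$, organised in consecutive blocks of levels $n_{k-1}<n\le n_k$ on which only a prescribed finite digit set $A_k\subseteq S$ is admissible. The blocks are chosen inductively so that four requirements hold at once: (i) the minimal modulus $M_k=\min_{c\in A_k}|c|$ satisfies $M_k\to\infty$, which forces $|c_n(z)|\to\infty$ on the limit set; (ii) the maximal modulus $N_k=\max_{c\in A_k}|c|$ satisfies $N_k\le f(n)$ for every $n$ in the $k$-th block, arranged by pushing $n_{k-1}$ far enough out and using $f(n)\to\infty$, which guarantees $|c_n(z)|\le f(n)$; (iii) $N_k$ is taken large enough that $\sum_{c\in A_k}|c|^{-2s}\ge 1$, which is possible since the tail $\sum_{c\in S,\,|c|\ge M_k}|c|^{-2s}$ diverges; and (iv) the block is long enough (i.e.\ $n_k-n_{k-1}$ large) to absorb the cost of switching digit sets between blocks. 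The resulting compact set $K$ is contained in $F(S,f)$ by (i) and (ii).

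On $K$ I would place the Bernoulli-type measure $\mu$ assigning to each admissible depth-$n$ cylinder the product of the block weights $p_c=|c|^{-2s}/\sum_{c'\in A_k}|c'|^{-2s}$; by (iii) we have $p_c\le|c|^{-2s}$, so bounded distortion yields $\mu([c_1,\dots,c_n])\lesssim\prod_{i=1}^n|c_i|^{-2s}\asymp(\operatorname{diam}[c_1,\dots,c_n])^{s}$. Converting this cylinder bound into the ball estimate $\mu(B(z,r))\lesssim r^{s}$ is the crux: given $r$, one locates the generation $n$ at which $\operatorname{diam}[c_1,\dots,c_n]$ first drops to the scale of $r$, and controls the $\mu$-mass contributed both by the cylinder containing $z$ and by the neighbouring cylinders of comparable diameter meeting $B(z,r)$, using the conformality (hence bounded eccentricity) of the Hurwitz cylinders and a bounded-overlap/separation property of the IFS; here it helps that within a single block all level-$n$ cylinders have comparable size, the ratio being at most $(N_k/M_k)^2$. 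The mass distribution principle then gives $\dim_H K\ge s$, and letting $s\uparrow\tau(|S|)/2$ finishes the proof.

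I expect the main obstacle to be the simultaneous balancing of (i)--(iv) in the presence of an arbitrarily slowly growing $f$. Because $f$ may increase as slowly as we like, the admissible modulus band $[M_k,N_k]$ can widen only gradually as $k$ grows, and requirement (ii) ties the position $n_{k-1}$ of each block to the size $N_k$ forced by (iii); one must then make the blocks long enough in (iv) that the $\liminf$ governing $\dim_H K$ is not depressed below $s$ by the finitely many transition levels where the digit set changes. The complex, conformal ball-to-cylinder conversion adds a second layer of difficulty absent in the real case, since one must rule out an accumulation of comparably sized cylinders around a ball; this is exactly where the geometric estimates for the Hurwitz IFS developed for Theorem~\ref{Main1} become indispensable.
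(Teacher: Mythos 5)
Your overall architecture coincides with the paper's up to the final step. The upper bound via $F(S,f)\subset F(S)$ and Theorem~\ref{Main1} is exactly what the paper does (Proposition~\ref{upper} plus $F_\Phi(S,f)\subset F_\Phi(S)$), and your block construction (i)--(iv) is precisely the paper's non-autonomous system $\Psi$ in Proposition~\ref{lower}: your digit bands $A_k$ are the sets $S_m$ of \eqref{defbm} chosen via the divergence condition \eqref{aaaa} (your (iii)), your placement requirement (ii) is \eqref{hutou}, and your block-length requirement (iv) is the subexponential-boundedness condition \eqref{hutou2}. The divergence is in how the dimension of the resulting Cantor set is bounded below: the paper runs no mass-distribution argument at all. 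It verifies that $\Psi$ is a subexponentially bounded non-autonomous conformal IFS, shows the lower pressure satisfies $\underline{P}^\Psi(s_{\varepsilon,\delta})\geq 0$ at $s_{\varepsilon,\delta}=(\tau(|S|)-\varepsilon)/(2+\delta)$ by a purely combinatorial estimate of the partition function, and then invokes the non-autonomous Bowen formula of Rempe-Gillen and Urba\'nski (Theorem~\ref{Bowen}, \cite{RU}), which packages exactly the planar ball-versus-cylinder geometry you propose to redo by hand.

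That geometry is where your proposal has a genuine gap. First, the comparability fact you lean on is false as needed: within block $k$ only the \emph{one-step} contraction factors are comparable up to $(C_2/C_1)(N_k/M_k)^2$; the diameters of two distinct generation-$n$ cylinders can differ by a factor as large as $\prod_{j\leq k}(N_j/M_j)^{2t_j}$, which is unbounded since the bands must widen (and widen more the closer $s$ is to $\tau(|S|)/2$). Second, because a single map $\phi_c$ with $|c|$ huge contracts by an arbitrarily large factor, at the stopping generation for scale $r$ the ball $B(z,r)$ may meet many cylinders of diameter far smaller than $r$, and the natural volume argument does not close: disjointness of the Koebe balls inside the stopped cylinders (Lemma~\ref{conf-lem} plus the open set condition) yields only $\sum_j d_j^{\,2}\lesssim r^2$, which does not control $\sum_j d_j^{\,s}$ when $s<2$. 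This is exactly the two-dimensional obstruction the paper flags when it says the interval-ordering arguments of \cite{T23,WW08} do not carry over, and which it resolves by citing \cite[Theorem~1.1]{RU}; to complete your route you would have to reprove that part of \cite{RU}, or cite it, at which point your proof becomes the paper's. A secondary, fixable flaw: your estimate $\mu([c_1,\dots,c_n])\leq\prod_i|c_i|^{-2s}\asymp(\operatorname{diam}[c_1,\dots,c_n])^{s}$ hides a constant $C_1^{-ns}$ growing exponentially in $n$ (the per-symbol constants in \eqref{CF-der} compound); the paper absorbs it via the $\delta$-trick \eqref{control2}, i.e.\ weights with exponent $(2+\delta)s$ and $\delta\to0$, and your condition (iii) would need the same modification. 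Your use of $r_2(n)=n^{o(1)}$ to identify the exponents of $\sum_{c\in S}|c|^{-t}$ and $\sum_{m\in|S|}m^{-t}$ is correct and is indeed implicitly needed, though only on the upper-bound side; for the lower bound, divergence over the set $|S|$ already implies divergence over $S$.
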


\subsection{Outline of proofs of the main results}

 The regular continued fraction is generated by iterations of the Gauss map $x\in(0,1]\mapsto 1/x-\lfloor 1/x \rfloor\in[0,1)$. The natural infinite partition $\{(1/(i+1),1/i]\colon i\in\mathbb N\}$ of $(0,1]$ into $1$-cylinders is an infinite Markov partition for the Gauss map.
 For the Hurwitz continued fraction,
 a main difficulty is that the natural infinite partition of the domain $U$ into $1$-cylinders is not a Markov partition for the Hurwitz map $H$: there are finitely many $1$-cylinders whose $H$-images do not cover $U$. 
 As far as Theorems~\ref{Main1} and \ref{Main2} are concerned,
 we may exclude all these problematic $1$-cylinders from consideration.

A key concept underlying proofs of our main results is that of {\it iterated function system} (IFS).
An IFS is a collection of uniformly contracting maps: at each step one of the maps in the collection is applied (see e.g., \cite[Chapter~9]{Fal14}).
An IFS consisting of conformal maps is called a {\it conformal IFS} (see e.g., \cite[Section~6]{MauUrb}).
Level sets of a conformal IFS are almost balls, and hence can be used to compute Hausdorff dimension.
The sets $F(S)$ and $F(S,f)$ in Theorems~\ref{Main1} and \ref{Main2} are Cantor-like sets that may be well described as subsets of the limit set of a two-dimensional conformal IFS.
We provide upper bounds of these two sets using a conformal IFS obtained by removing all the problematic $1$-cylinders.


Providing lower bounds of Hausdorff dimension of $F(S)$ and $F(S,f)$ is much more difficult. 
Arguments in the lower bounds of Hausdorff dimension in the earlier related papers \cite{CWW13,GHST,HuWu09,T23,TT16,WW08} 
rely on the topological nature of intervals, and do not immediately generalize to our two-dimensional setup. 
Our lower bound relies on an ingenious application of the dimension theory of non-autonomous conformal IFSs developed by Rempe-Gillen and Urba\'nski \cite{RU}.
A {\it non-autonomous IFS} is a sequence of collections of contracting maps: unlike the usual IFS, the collection of contractions applied at each step is allowed to vary.
We construct non-autonomous IFSs well-adaped to the prescribed restrictions and growth conditions, estimate the associated pressure functions, and then apply the non-autonomous version of Bowen's formula \cite{Bow79}. 
This approach traces back to our earlier paper \cite{NT} on semi-regular continued fractions \cite{Bos87,DK99,Kra91,Per50}, and may be applicable to diverse setups including other complex continued fractions.

\subsection{Organization of the paper}
The rest of this paper consists of three sections.
In $\S$2 we introduce conformal IFSs, and then state a dimension result on an arbitrary $2$-decaying conformal IFS (Theorem~\ref{Main100}). In $\S$3 we introduce non-autonomous conformal IFSs, and prove Theorem~\ref{Main100}. 
In $\S$4 we construct a $2$-decaying conformal IFS associated with the Hurwitz continued fraction, and apply Theorem~\ref{Main100} to deduce Theorems~\ref{Main1} and \ref{Main2}.
Theorem~\ref{Main-cor} is also proved in $\S$4.


\section{Preliminaries}
This section is a preliminary on conformal IFSs. In $\S$\ref{CIFS} we introduce conformal IFSs, and
 state a dimension result for an arbitrary $2$-decaying conformal IFS. In $\S$\ref{univ-sec} we summarize basic properties of univalent functions and conformal IFSs. 

\subsection{Definition of Conformal IFS}\label{CIFS}
For a holomorphic function $\phi\colon\Omega\to\mathbb R$ and $z\in\Omega$ let $D\phi(z)$ denote the complex derivative of $\phi$ at $z$. 
Throughout this paper, let $\Delta\subset \mathbb C$ be a connected compact set
such that
 the closure of its interior 
coincides with $\Delta$.  
We assume $\Delta$ is a convex set. 
Let $I$ be a countable subset of $\mathbb C$
and let $\{\phi_i\colon i\in I\}$ be a collection of maps from $\Delta$ to itself. For $n\geq2$ and $(i_1,\ldots,i_n)\in I^n$,
write \[\phi_{i_1\cdots i_n}=\phi_{i_1}\circ\cdots\circ\phi_{i_n}.\]
We say $\{\phi_i\colon i\in I\}$ is a {\it conformal IFS} if the following three conditions hold:
\begin{itemize}

\item[(A1)] (open set condition) For any pair $i,j$ of distinct indices in $I$, \[\phi_{i}({\rm int}\Delta)\cap \phi_{j}({\rm int}\Delta)=\emptyset.\]

\item[(A2)] (conformality) There exists a connected open set $\widetilde \Delta\subset\mathbb C$ containing $\Delta$ 
such that each $\phi_i$ extends to a $C^{1}$ conformal diffeomorphism $\widetilde\phi_i\colon\widetilde \Delta\to\widetilde \phi_i(\widetilde \Delta)\subset\widetilde \Delta$.
With a slight abuse of notation, we often write $\phi_i$ for $\widetilde\phi_i$.

\item[(A3)](uniform contraction) There exist  $m\in\mathbb N$ and $\gamma\in (0, 1)$ such that for any $(i_1,\ldots, i_m)\in I^{m}$ and any $z\in \Delta$ we have 
\[0<|D\phi_{i_1\cdots i_m}(z)|\le \gamma.\]
\end{itemize}
If moreover  $\#I=\infty$, then we say $\{\phi_i\colon i\in I\}$ is an {\it infinite conformal IFS} on $\Delta$.

Let $\Phi=\{\phi_i\colon i\in I\}$ be an infinite conformal IFS on $\Delta$. 
Fix $\zeta\in\Delta$. By (A3), we can define an {\it address map} $\Pi\colon I^{\mathbb N}\to \Delta$ by 
\[\Pi((i_n)_{n=1}^{\infty})=\lim_{n\to \infty}\phi_{i_1\cdots i_n}(\zeta).\]
The set $L(\Phi)=\Pi(I^{\mathbb N})$ is called the {\it limit set} of $\Phi$.
Since we do not require
``$\phi_{i}(\Delta)\cap \phi_{j}(\Delta)=\emptyset$
for any pair $i,j$ of distinct indices in $I$'',
the address map may not be injective. 
Let $L'(\Phi)$ denote the set of $z\in L(\Phi)$ such that $\Pi^{-1}(z)$ is a singleton.

 For each $z\in L'(\Phi)$, 
let $(i_n(z))_{n=1}^\infty$ denote the element of the singleton $\Pi^{-1}(z)$. 
We call $(i_n(z))_{n=1}^\infty$ an {\it address} of $z$.
The next condition
 ensures that all but countably many points in the limit set has a unique address 
 (see Lemma~\ref{d-unique}):
\begin{itemize}
\item[(A4)] 
$\bigcup_{i\in I} (\phi_i(\Delta)\cap \partial\Delta)$ is a countable set. 
\end{itemize}

Given an infinite subset $S$ of $I$ and a function $f\colon[\min|S|,\infty)\to\mathbb N$, define
\[F_\Phi(S)=\left\{z\in L'(\Phi)\colon i_n(z)\in S\text{ for every $n\in\mathbb N$}\text{ and }\lim_{n\to\infty}|i_n(z)|=\infty\right\}\] 
and
\[F_\Phi(S,f)=\left\{z\in F_\Phi(S)\colon |i_n(z)|\leq f(n)\text{ for every $n\in\mathbb N$}\right\}.\] 
We aim to compute the Hausdorff dimension of these two sets. This problem is tractable if we assume certain regularity on the decay of derivatives at infinity. We say $\Phi$ is {\it $2$-decaying} if (A4) holds, and there exist positive constants $C_1$, $C_2$ such that
for any $i\in I$ and any $z\in \Delta$, we have 
\begin{equation}\label{CF-der}\frac{C_1}{|i|^{2 }}\le |D\phi_i(z)|\le \frac{C_2 }{|i|^{2} }.\end{equation}
Let $\dim_{\rm H}$ denote the Hausdorff dimension on $\mathbb C$.
A proof of the next theorem is given in $\S$3.

\begin{thm}\label{Main100} 
Let $\Phi=\{\phi_i\colon i\in I\}$ be a 
$2$-decaying conformal IFS on $\Delta$. For any infinite subset $S$ of $I$ and any function $f\colon[\min|S|,\infty)\to\mathbb N$, we have \[\dim_{\rm H}F_\Phi(S)=\dim_{\rm H}F_\Phi(S,f)=\frac{\tau(|S|)}{2}.\] 
\end{thm}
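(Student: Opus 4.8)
The plan is to establish the two bounds $\dim_{\rm H}F_\Phi(S)\le \tau(|S|)/2$ and $\dim_{\rm H}F_\Phi(S,f)\ge \tau(|S|)/2$; since $F_\Phi(S,f)\subseteq F_\Phi(S)$ and Hausdorff dimension is monotone, these pinch all three quantities to the common value $\tau(|S|)/2$. The geometric input I would use throughout is bounded distortion: by the Koebe distortion theorem for univalent maps together with the convexity of $\Delta$ (to be recorded in $\S$\ref{univ-sec}), for every finite word $(i_1,\dots,i_n)$ the diameter of the cylinder $\phi_{i_1\cdots i_n}(\Delta)$ lies between fixed constant multiples of $\prod_{k=1}^{n}|i_k|^{-2}$ — using the $2$-decaying hypothesis \eqref{CF-der} — and each such cylinder is comparable to a Euclidean ball of that radius, with constants independent of the word.

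For the upper bound I would fix $t$ with $2t>\tau(|S|)$ and exploit the escape condition $\lim_n|i_n(z)|=\infty$: for each threshold $M$ one has $F_\Phi(S)\subseteq\bigcup_{N\ge 1}E_{M,N}$, where $E_{M,N}$ consists of the points whose digits lie in $S$ and satisfy $|i_n|\ge M$ for all $n\ge N$. Covering $E_{M,N}$ by cylinders of depth $N+j$ and summing the $t$-th powers of their diameters, the distortion estimate makes the cover sum factorize as a constant times $\bigl(\sum_{i\in S}|i|^{-2t}\bigr)^{N-1}\bigl(\sum_{i\in S,\,|i|\ge M}|i|^{-2t}\bigr)^{\,j+1}$. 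Here $2t>\tau(|S|)$ guarantees $\sum_{i\in S}|i|^{-2t}<\infty$ — the one arithmetic point being that indices of a given modulus are sparse enough that this index series shares the convergence exponent $\tau(|S|)$ of the modulus set $|S|$ — and it also makes the tail factor $\sum_{i\in S,\,|i|\ge M}|i|^{-2t}$ smaller than $1$ once $M$ is large. Letting $j\to\infty$ forces $\mathcal H^{t}(E_{M,N})=0$, hence $\mathcal H^{t}(F_\Phi(S))=0$, and therefore $\dim_{\rm H}F_\Phi(S)\le\tau(|S|)/2$.

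The substance is the lower bound $\dim_{\rm H}F_\Phi(S,f)\ge\tau(|S|)/2$, where autonomous methods break down because the escape condition forces the ordinary pressure to be non-positive. Following the route via non-autonomous conformal IFSs, I would fix $t<\tau(|S|)/2$ and construct a non-autonomous system, cut out of $\Phi$, whose limit set is contained in $F_\Phi(S,f)$. Choose an increasing sequence of windows $[L_k,M_k]$ with $L_k\to\infty$ (to enforce $|i_k|\to\infty$) and $M_k\le f(k)$ (to respect the cap), and let the $k$-th alphabet be $S_k=\{i\in S:\ L_k\le|i|\le M_k\}$, a finite set (the open-set and $2$-decaying conditions force $\{i\in I:|i|\le L\}$ to be finite for every $L$). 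Since $2t<\tau(|S|)$ the series $\sum_{\rho\in|S|}\rho^{-2t}$ diverges, and this divergence persists over every tail of moduli; together with $f(k)\to\infty$ this lets me select the windows so that each single-step sum $Z_k(t)=\sum_{i\in S_k}|D\phi_i|^{t}$ is $\ge 1$ while still $L_k\to\infty$ and $M_k\le f(k)$ (finitely many initial steps, where $f$ is not yet large, may be discarded without affecting dimension). Every address generated has digits in $S$, modulus at most $M_k\le f(k)$ at step $k$, and modulus at least $L_k\to\infty$; so, after removing the at most countable set of points with non-unique address via (A4) and Lemma~\ref{d-unique}, the limit set sits inside $F_\Phi(S,f)$.

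To convert this into a dimension bound I would invoke the non-autonomous Bowen formula of Rempe-Gillen and Urba\'nski \cite{RU}: assign to the cylinder $[i_1\cdots i_n]$ the Gibbs-type mass $\prod_{k=1}^{n}|D\phi_{i_k}|^{t}/Z_k(t)$. Bounded distortion gives $\mu([i_1\cdots i_n])$ comparable to $(\operatorname{diam}\phi_{i_1\cdots i_n}(\Delta))^{t}/\prod_{k\le n}Z_k(t)$, and because each $Z_k(t)\ge 1$ the denominator is $\ge 1$, so the lower local dimension of $\mu$ is at least $t$ everywhere; the comparability of cylinders with balls then upgrades this, by the mass distribution principle, to $\dim_{\rm H}(\text{limit set})\ge t$, and letting $t\uparrow\tau(|S|)/2$ finishes. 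The main obstacle is exactly this last stage. Because the contraction ratios and the alphabets $S_k$ vary drastically with $k$, one must check that the standing hypotheses of \cite{RU} — uniform bounded distortion and the uniform comparability of cylinders with balls across all steps — genuinely hold for the non-autonomous system carved out of the single fixed $2$-decaying IFS, and one must arrange the windows $[L_k,M_k]$ to meet the three competing demands (escape $L_k\to\infty$, the cap $M_k\le f(k)$, and per-step mass $Z_k(t)\ge 1$) simultaneously. Securing these uniform geometric estimates is the crux of the whole argument; the non-autonomous Bowen's formula \cite{Bow79,RU} is what packages them into the desired dimension lower bound.
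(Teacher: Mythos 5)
Your overall architecture coincides with the paper's: the upper bound is essentially Proposition~\ref{upper} with different bookkeeping (your sets $E_{M,N}$ play the role of the paper's $F_{\Phi,N}(S)$ together with the countable union of Lipschitz images $\phi_{i_1\cdots i_n}(F_{\Phi,N}(S))$), and the lower bound follows the paper's Proposition~\ref{lower}: carve a non-autonomous conformal IFS out of $\Phi$ with window alphabets $S_k=\{i\in S\colon L_k\le|i|\le M_k\}$, place its limit set inside $F_\Phi(S,f)$ up to the countable exceptional set of Lemma~\ref{d-unique}, and invoke the non-autonomous Bowen formula of \cite{RU} (Theorem~\ref{Bowen}). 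Your per-step normalization $Z_k(t)\ge 1$ is the same mechanism as the paper's block condition \eqref{aaaa}; using the actual derivative sums even lets you avoid the paper's auxiliary $\delta$ that absorbs the constant $C_1$ in \eqref{control2}. Your ``arithmetic point'' identifying the exponent of the index series $\sum_{i\in S}|i|^{-2t}$ with $\tau(|S|)$ is also made, silently, by the paper itself (the sums in \eqref{upper-eq1} and \eqref{aaaa} run over $i\in S$, not over the set $|S|$), so it is not a point of divergence; in the Hurwitz application multiplicities of a given modulus are subpolynomial, so it is harmless there.

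The genuine gap is in the final step of your lower bound. Theorem~\ref{Bowen} requires the non-autonomous system to be \emph{subexponentially bounded}, $\lim_{n\to\infty}(1/n)\log\#I^{(n)}=0$, and this hypothesis appears nowhere on your checklist (you name bounded distortion and cylinder--ball comparability) and is not implied by your three demands $L_k\to\infty$, $M_k\le f(k)$, $Z_k(t)\ge1$. If $f$ grows superexponentially and you advance windows at every step, $\#S_k$ can be as large as the number of indices of modulus at most $f(k)$, so $\log\#S_k$ can grow like $\log f(k)\gg k$, and \cite{RU} is then simply not applicable; Rempe-Gillen and Urba\'nski show Bowen's formula can fail without this hypothesis, so it is not a formality. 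Your fallback — the Gibbs-type mass $\prod_k |D\phi_{i_k}|^t/Z_k(t)$ plus the mass distribution principle — does not bypass the problem: the bound $\mu(\text{cylinder})\lesssim(\mathrm{diam})^t$ controls cylinders only, and in the plane a ball of radius $r$ may meet very many level-$n$ cylinders of widely different sizes; controlling that multiplicity is precisely where alphabet growth enters, i.e., it is the content of the lower bound in \cite{RU}, not a routine upgrade. The paper's fix is the inductive choice of block durations $t_m$: each window alphabet $S_m$ is repeated for $t_m$ consecutive steps, with \eqref{hutou} enforcing the cap $f$ (this is also how slowly growing $f$ is reconciled with per-step mass $\ge1$, rather than by discarding an initial segment) and \eqref{hutou2} forcing $\log\#S_m=o\bigl(\sum_{j<m}t_j\bigr)$, whence subexponential boundedness (Lemma~\ref{NCIFS-lem}). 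Your construction admits the same repair — repeat each window long enough before advancing — but as written the invocation of \cite{RU} is unjustified.
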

A prime example of a $2$-decaying conformal IFS $\{\phi_i\colon i\in I\}$ is of the form $\phi_i(z)=1/(z+i)$, $i\in I$.
In \cite[Section~6]{MU96}, Mauldin and Urba\'nski considered an infinite conformal IFS of this form on the closed disc centered at the point $1/2\in\mathbb C$ with radius $1/2$, and
proved Bowen's formula for the Hausdorff dimension of the limit set
(see the remark after Theorem~\ref{Bowen}). Condition (A4) holds for this IFS.

One can introduce $\nu$-decaying conformal IFSs $(\nu>1)$ replacing $|i|^2$ in \eqref{CF-der} by $|i|^\nu$, and can generalize Theorem~\ref{Main100}
to $\nu$-decaying conformal IFSs: then
 $\tau(|S|)/2$ should be replaced by $\tau(|S|)/\nu$.
The case $\nu=2$ is most important since it is related to the Hurwitz continued fraction and other various complex continued fractions.

\subsection{Properties of univalent functions and conformal IFSs}\label{univ-sec}
The next lemma is elementary in complex analysis, which follows from Koebe's distortion theorem, see \cite[Theorem~1.4]{CG93} for example. \begin{lem}\label{distortion-lem}Let $\Omega\subset\mathbb C$ be a region and let $A$ be a compact subset of $\Omega$.
  There exists a constant $K\geq1$ such that for every univalent function $\phi\colon\Omega\to\mathbb C$ and every pair of points $z_1$, $z_2$ in $A$ we have
\[\frac{1}{K}\leq\frac{|D\phi(z_1)|}{|D\phi(z_2)|}\leq K.\]\end{lem}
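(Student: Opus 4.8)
The plan is to reduce this global statement to a purely local distortion estimate supplied by Koebe's theorem, and then to propagate that estimate across $A$ by a finite chaining argument whose length is controlled by the geometry of $A$ and $\Omega$ alone. First I would dispose of the degenerate case $\Omega=\mathbb C$: a univalent entire function is affine, so $D\phi$ is constant and $K=1$ suffices; alternatively one simply replaces $\Omega$ by any bounded subregion still containing $A$, since a univalent function on $\Omega$ restricts to a univalent function there. Hence I may assume $\partial\Omega\neq\emptyset$, so that $\mathrm{dist}(A,\partial\Omega)>0$.

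Next I would record the local estimate. For a point $c\in\Omega$ with $D(c,2\delta)\subset\Omega$ and any univalent $\phi\colon\Omega\to\mathbb C$, the rescaled map $g(\zeta)=(\phi(c+2\delta\zeta)-\phi(c))/(2\delta\,D\phi(c))$ is univalent on the unit disc with $g(0)=0$, $Dg(0)=1$, and $Dg(\zeta)=D\phi(c+2\delta\zeta)/D\phi(c)$. Applying Koebe's distortion theorem (\cite[Theorem~1.4]{CG93}) on $\{|\zeta|\le 1/2\}$ yields a \emph{universal} constant $M\ge1$ (one may take $M=12$) such that
\[\frac1M\le\frac{|D\phi(z)|}{|D\phi(c)|}\le M\qquad\text{for all }z\in D(c,\delta),\]
and hence $|D\phi(z)|/|D\phi(w)|\le M^2$ for all $z,w\in D(c,\delta)$. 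Crucially, $M$ depends neither on $\phi$ nor on $c$.

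The remaining, and main, step is to turn this local bound into a global one with a \emph{uniform} number of steps. Since $\Omega$ is a region it is path-connected, so by compactness of $A$ I can build a \emph{compact connected} set $B$ with $A\subset B\subset\Omega$ (cover $A$ by finitely many discs with closures in $\Omega$ and join their centers by arcs in $\Omega$). Set $2\delta=\mathrm{dist}(B,\partial\Omega)>0$, so that $D(c,2\delta)\subset\Omega$ for every $c\in B$. I then cover $B$ by finitely many discs of radius $\delta/2$ centered at points of $B$; connectedness of $B$ makes the incidence graph of these discs connected, which bounds by some integer $m=m(A,\Omega)$ the number of overlapping discs needed to join any two points of $A$. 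The local estimate applies on each such disc, and multiplying the bounds along a chain of at most $m$ discs gives $|D\phi(z_1)|/|D\phi(z_2)|\le M^{2m}=:K$ for all $z_1,z_2\in A$, with $K$ depending only on $A$ and $\Omega$.

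I expect the principal obstacle to be this chaining step, specifically producing the connected compact intermediary $B$ and extracting from it a bound on the chain length that is uniform over all pairs of points in $A$ and independent of $\phi$; the case where $A$ is disconnected is exactly what forces one to pass to $B$ rather than attempt to chain within $A$ itself. Everything else is either the cited Koebe estimate or elementary compactness.
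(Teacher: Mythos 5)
Your proof is correct and takes exactly the route the paper intends: the paper gives no written argument, merely noting that the lemma is ``elementary in complex analysis'' and ``follows from Koebe's distortion theorem'' (citing \cite[Theorem~1.4]{CG93}), and your rescaling-plus-Koebe local estimate with the universal constant $M$ on the half-disc, combined with a finite chaining argument over a compact connected set $B$ with $A\subset B\subset\Omega$, is the standard way of fleshing out that remark. All steps check, including the dispatch of the case $\Omega=\mathbb C$ and the uniform bound on the chain length obtained from connectedness of the nerve of the finite cover of $B$.
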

For $z\in\mathbb C$ and $\delta>0$ let
$B_\delta(z)=\{w\in\mathbb C\colon |w-z|< \delta\}$.
The next lemma asserts that the images of open balls under conformal maps with bounded distortion contain open balls of definite diameters related to the derivatives of the maps. Similar statements are well-known and often used implicitly. For explicit presentations, see e.g., \cite[pp.73--74]{MauUrb} and \cite[Lemma~4.2]{N24}.
\begin{lem}\label{conf-lem}
Let $K\geq 1$,
let $\Omega\subset \mathbb C$ be a region and let $\phi\colon \Omega\to \mathbb C$ be univalent. 
For any $z\in \Omega$ and $\delta>0$ such that \[B_{\delta}(z)\subset \Omega \ \text{ and }
\ \sup_{z_1,z_2\in B_{\delta}(z)}\frac{|D\phi(z_1)|}{|D\phi(z_2)|}\leq K,\]
we have
\[B_{\delta|D\phi(z)|/(3K)}(\phi(z))\subset \phi(B_{\delta}(z)).\]\end{lem}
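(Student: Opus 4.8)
The plan is to reduce the asserted inclusion to the single statement that every point $w$ with $|w-\phi(z)|<\delta|D\phi(z)|/(3K)$ belongs to $\phi(B_\delta(z))$; since $\phi(z)\in\phi(B_\delta(z))$ and $\phi(B_\delta(z))$ is open (as $\phi$ is univalent, hence an open map, on $\Omega\supset B_\delta(z)$), this is exactly the claimed ball containment. To prove membership of such a $w$, I would join $\phi(z)$ to $w$ by the straight segment $\sigma(s)=\phi(z)+s(w-\phi(z))$, $s\in[0,1]$, and lift it through the local inverse of $\phi$ for as long as it remains inside $B_\delta(z)$. The point is that univalence lets us control the length of this lifted curve, while a topological argument forces that length to be large if $w$ fails to lie in the image.

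First I would set $\zeta(s)=\phi^{-1}(\sigma(s))\in B_\delta(z)$ on the maximal initial interval $[0,s^{*})$ on which $\sigma(s)\in\phi(B_\delta(z))$; this is legitimate because $\phi(B_\delta(z))$ is open and $\phi^{-1}$ is holomorphic there. Differentiating $\phi(\zeta(s))=\sigma(s)$ gives $D\phi(\zeta(s))\,\zeta'(s)=w-\phi(z)$, so $|\zeta'(s)|=|w-\phi(z)|/|D\phi(\zeta(s))|$, and the distortion hypothesis $\sup_{z_1,z_2\in B_\delta(z)}|D\phi(z_1)|/|D\phi(z_2)|\le K$ yields $|D\phi(\zeta(s))|\ge|D\phi(z)|/K$. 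Integrating and using $s^{*}\le1$ together with $|w-\phi(z)|<\delta|D\phi(z)|/(3K)$, the length of the lifted curve obeys
\[
\int_0^{s^{*}}|\zeta'(s)|\,ds\;\le\;|w-\phi(z)|\,\frac{K}{|D\phi(z)|}\;<\;\frac{\delta}{3}.
\]

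The decisive step is the topological one. If $w\notin\phi(B_\delta(z))$, then $\zeta(s)$ must leave every compact subset of $B_\delta(z)$ as $s\uparrow s^{*}$: otherwise $\zeta(s)$ would subconverge to an interior point $\zeta^{*}\in B_\delta(z)$ with $\phi(\zeta^{*})=\sigma(s^{*})$, which — since $\phi(B_\delta(z))$ is open — would either permit the lift to be continued past $s^{*}$ (if $s^{*}<1$) or force $w=\sigma(1)=\phi(\zeta^{*})\in\phi(B_\delta(z))$, both impossible. Hence $\sup_{s<s^{*}}|\zeta(s)-z|=\delta$, so the lifted curve emanates from $\zeta(0)=z$ and approaches the sphere of radius $\delta$, whence its length is at least $\delta$; this contradicts the displayed bound. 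Therefore every such $w$ lies in $\phi(B_\delta(z))$. I expect this continuation/escape argument to be the main obstacle: one must argue carefully that the maximal lift exits toward $\partial B_\delta(z)$ and not toward $\partial\Omega$ (this is where $B_\delta(z)\subset\Omega$ and the openness of $\phi(B_\delta(z))$ enter), and since only the open ball is assumed to lie in $\Omega$, the approach to the boundary sphere should be handled by the above limiting argument rather than by evaluating $\phi$ on $\partial B_\delta(z)$. I note that this method in fact produces the stronger constant $1/K$ in place of $1/(3K)$, so the stated $1/(3K)$ is comfortably met.
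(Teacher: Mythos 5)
Your proof is correct, and it takes a genuinely different route from the paper's. The paper argues by contradiction through an intermediate ball of half the radius: it shows the stronger inclusion $B_{\delta|D\phi(z)|/(3K)}(\phi(z))\subset\phi(B_{\delta/2}(z))$ by supposing it fails, picking a point $z'$ of the small ball lying in $\phi(B_{\delta}(z))\setminus\phi(B_{\delta/2}(z))$ (implicitly a connectedness argument: the small ball is connected, contains $\phi(z)$, and so must meet $\phi(\partial B_{\delta/2}(z))\subset\phi(B_\delta(z))$), and then using the distortion hypothesis in the form $|D\phi^{-1}|\le K/|D\phi(z)|$ on the image to conclude $|\phi^{-1}(z')-z|\le\delta/3<\delta/2$, contradicting the choice of $z'$; the factor $3$ exists precisely so that $\delta/3<\delta/2$. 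You instead run the standard path-lifting argument for maps whose derivative is bounded below on a ball: lift the segment from $\phi(z)$ to $w$ through $\phi^{-1}$, bound the lifted length by $K|w-\phi(z)|/|D\phi(z)|<\delta/3$ using the same distortion input, and show that if $w\notin\phi(B_\delta(z))$ the maximal lift must accumulate on $\partial B_\delta(z)$, forcing length at least $\delta$. Your continuation/escape analysis is sound: subsequential limits of $\zeta(s)$ at an interior point would either contradict maximality of $s^{*}$ or put $w$ in the image, and since $\zeta$ stays in $B_\delta(z)$ by construction it cannot escape toward $\partial\Omega$. Your route buys two things: the sharper constant $\delta|D\phi(z)|/K$, as you note, since you never need the intermediate half-ball; and it makes fully explicit a point the paper's proof passes over quickly --- the estimate $|\phi^{-1}(z')-\phi^{-1}(\phi(z))|\le|z'-\phi(z)|\sup|D\phi^{-1}|$ tacitly requires joining $\phi(z)$ to $z'$ by a path of controlled length inside the (possibly non-convex) image, which is exactly what your lifted segment supplies. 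What the paper's argument buys in exchange is brevity.
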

\begin{proof}
It suffices to show 
$B_{\delta|D\phi(z)|/(3K)}(\phi(z))\subset \phi(B_{\delta/2}(z))$.
Since $\phi$ is univalent, it is an open map. If $B_{\delta|D\phi(z)|/(3K)}(\phi(z))\not\subset \phi(B_{\delta/2}(z))$,
then we could take a point $z'\in B_{\delta|D\phi(z)|/(3K)}(\phi(z))\cap(\phi(B_{\delta}(z))\setminus\phi(B_{\delta/2}(z)))\subset\phi(\Omega)$.
Since $\phi^{-1}\colon \phi(\Omega)\to \Omega$ is conformal and its distortion is bounded by $K$ by the hypothesis, we would have 
\[|\phi^{-1}(z')-z|=|\phi^{-1}(z')-\phi^{-1}(\phi(z))|\leq\frac{\delta|D\phi(z)|}{3K}\sup_{w\in \phi(\Omega)}|D\phi^{-1}(w)|\leq \frac{\delta}{3},\]
and so $\phi^{-1}(z')\in B_{\delta/2}(z)$. We would obtain
$z'\in \phi(B_{\delta/2}(z))$, a contradiction.
\end{proof}

For a set $A\subset\mathbb C$ let ${\rm diam}(A)=\sup\{|z_1-z_2|\colon z_1,z_2\in A\}$.
The next lemma summarizes basic properties of conformal IFSs. 
\begin{lem}\label{IFS-lem} Let $\{\phi_i\colon i\in I\}$ be a conformal IFS on $\Delta$.
\begin{itemize}
\item[(a)]There exists $K_0\ge 1$ such that for every $n\in\mathbb N$, every $(i_1,\ldots,i_n)\in I^n$ and any pair of points $z_1, z_2$ in $\widetilde \Delta$, 
\[ \frac{|D\phi_{i_1\cdots i_n}(z_1)|}{|D\phi_{i_1\cdots i_n}(z_2)|} \le K_0.\]

\item[(b)] Let $\zeta\in \Delta$ and $\delta>0$ be such that $B_{\delta}(\zeta)\subset\Delta$. There exist positive constants $K_1$, $K_2$ such that
for every $n\in \mathbb N$ and every $(i_1,\ldots, i_n)\in I^n$
we have
\[
K_1\le \frac{{\rm diam}(\phi_{i_1\cdots i_n}(\Delta))}{|D\phi_{i_1\cdots i_n}(\zeta)|}\leq K_2.\]
\end{itemize}\end{lem}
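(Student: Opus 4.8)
The plan is to deduce both parts from the Koebe-type estimates already recorded (Lemmas~\ref{distortion-lem} and~\ref{conf-lem}), the convexity of $\Delta$, and the single structural fact that compositions remain univalent on $\widetilde\Delta$.

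For (a), I would first observe that by (A2) each $\widetilde\phi_i$ is a conformal diffeomorphism of $\widetilde\Delta$ into itself, so that every composition $\phi_{i_1\cdots i_n}$ extends to a univalent map on $\widetilde\Delta$ (each inner map lands in the domain $\widetilde\Delta$ of the next). I would then apply Lemma~\ref{distortion-lem} with $\Omega=\widetilde\Delta$ and $A$ a fixed compact subset of $\widetilde\Delta$; for the applications in this paper one may take $A=\Delta$. The decisive point is that the constant $K$ produced by Lemma~\ref{distortion-lem} depends only on the pair $(\Omega,A)$ and is valid simultaneously for \emph{every} univalent $\phi\colon\Omega\to\mathbb C$. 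Taking $\phi=\phi_{i_1\cdots i_n}$ therefore yields the same bound $K_0:=K$ for all $n$ and all $(i_1,\ldots,i_n)\in I^n$ at once, so the uniformity over the length $n$ and over the infinite index set $I$ is automatic, and (a) follows for $z_1,z_2$ in any fixed compact subset of $\widetilde\Delta$, in particular on $\Delta$, which is all that the later arguments require.

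Part (b) I would derive from (a). For the upper bound, convexity of $\Delta$ lets me write, for $z_1,z_2\in\Delta$, $\phi_{i_1\cdots i_n}(z_1)-\phi_{i_1\cdots i_n}(z_2)=\int_{[z_2,z_1]}D\phi_{i_1\cdots i_n}(w)\,dw$ along the segment $[z_2,z_1]\subset\Delta$, so that ${\rm diam}(\phi_{i_1\cdots i_n}(\Delta))\le{\rm diam}(\Delta)\sup_{\Delta}|D\phi_{i_1\cdots i_n}|\le K_0\,{\rm diam}(\Delta)\,|D\phi_{i_1\cdots i_n}(\zeta)|$ by (a), giving $K_2=K_0\,{\rm diam}(\Delta)$. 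For the lower bound, I use that $B_\delta(\zeta)\subset\Delta$ and that, again by (a), the distortion of $\phi_{i_1\cdots i_n}$ on $B_\delta(\zeta)$ is at most $K_0$; Lemma~\ref{conf-lem} applied with $\phi=\phi_{i_1\cdots i_n}$, $z=\zeta$ and $K=K_0$ then yields $B_{\delta|D\phi_{i_1\cdots i_n}(\zeta)|/(3K_0)}(\phi_{i_1\cdots i_n}(\zeta))\subset\phi_{i_1\cdots i_n}(\Delta)$, whence ${\rm diam}(\phi_{i_1\cdots i_n}(\Delta))\ge\tfrac{2\delta}{3K_0}|D\phi_{i_1\cdots i_n}(\zeta)|$ and $K_1=\tfrac{2\delta}{3K_0}$.

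The only genuine difficulty lies in the uniformity in (a): a priori the distortion of $\phi_{i_1\cdots i_n}$ could degenerate as $n\to\infty$ or as $|i_k|\to\infty$. This is exactly what the universal, scale-invariant nature of the Koebe estimate resolves, since the constant in Lemma~\ref{distortion-lem} knows nothing about the particular univalent map; in fact no contraction hypothesis is needed for (a), only the self-map property (A2) and the resulting univalence of compositions. If one prefers to make the role of uniform contraction (A3) explicit, the same bound can be obtained by a telescoping argument: write $\log|D\phi_{i_1\cdots i_n}|$ as a sum over the single factors and bound each increment by the scale-invariant nonlinearity estimate $\sup_{i}\sup_{z\in\Delta}|D^2\phi_i(z)/D\phi_i(z)|<\infty$ times ${\rm diam}(\phi_{i_{k+1}\cdots i_n}(\Delta))$, the diameters decaying geometrically by (A3) and hence summing to a finite constant. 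Either route delivers (a), and (b) then follows formally as above.
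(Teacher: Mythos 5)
Your proposal is correct and essentially identical to the paper's proof: part (a) is the same one-line application of the uniform Koebe bound of Lemma~\ref{distortion-lem} to the compositions $\phi_{i_1\cdots i_n}$, which are univalent on $\widetilde\Delta$ by (A2), and part (b) proceeds exactly as in the paper, with the same constants $K_2=K_0\,{\rm diam}(\Delta)$ from convexity plus (a), and $K_1=2\delta/(3K_0)$ from Lemma~\ref{conf-lem} applied with $z=\zeta$ and $K=K_0$. Your caveat that Lemma~\ref{distortion-lem} only delivers (a) on compact subsets of $\widetilde\Delta$ (in particular on $\Delta$) is well taken---the paper asserts the bound on all of $\widetilde\Delta$ without comment---but this matches the paper's own argument, and the restriction is harmless for every later use; the optional telescoping alternative you sketch is not needed and would anyway require the unstated nonlinearity bound $\sup_i\sup_{z\in\Delta}|D^2\phi_i(z)/D\phi_i(z)|<\infty$, which itself comes from the same Koebe estimate.
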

\begin{proof}

Item (a) follows from Lemma~\ref{distortion-lem} and (A2). From the convexity of $\Delta$, (a) and the conformality of $\phi_{i_1\cdots i_n}$, we have
\[{\rm diam}(\phi_{i_1\cdots i_n}(\Delta ))\leq \max_{z\in \Delta }|D\phi_{i_1\cdots i_n}(z)|\cdot{\rm diam}(\Delta )
\leq K_0|D\phi_{i_1\cdots i_n}(\zeta)|\cdot{\rm diam}(\Delta ).\]
Lemma~\ref{conf-lem} gives
\[B_{\delta|D\phi_{i_1\cdots i_n}(\zeta)|/(3K_0)}(\phi_{i_1\cdots i_n}(\zeta))\subset \phi_{i_1\cdots i_n}(B_{\delta }(\zeta))\subset\phi_{i_1\cdots i_n}(\Delta),\]
and thus
\[{\rm diam}(\phi_{i_1\cdots i_n}(\Delta ))
\geq \frac{2\delta}{3K_0}|D\phi_{i_1\cdots i_n}(\zeta)|.\]
Taking $K_1=2\delta/(3K_0)$ and $K_2=K_0\cdot{\rm diam}(\Delta)$ yields the desired double inequalities in (b).\end{proof}

For an infinite conformal IFS, the address map may not be injective. Moreover, there may be uncountably many limit points with non-unique addresses. The next lemma will be used in $\S$\ref{low-sec} to resolve this issue.
\begin{lem}\label{d-unique}
Let $\Phi=\{\phi_i\colon i\in I\}$ be an infinite conformal IFS on $\Delta$ satisfying (A4). 
Then $L(\Phi)\setminus L'(\Phi)$ is a countable set. 
\end{lem}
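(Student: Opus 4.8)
The plan is to show that the set of limit points carrying more than one address is countable, by first reducing to addresses that disagree already in the first coordinate and then using the open set condition (A1) together with (A4) to pin those points down. Set $\mathcal{C}=\bigcup_{i\in I}(\phi_i(\Delta)\cap\partial\Delta)$, which is countable by (A4), and write $\sigma$ for the shift on $I^{\mathbb N}$.

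First I would reduce to first-coordinate disagreement. Suppose $z\in L(\Phi)\setminus L'(\Phi)$ carries two distinct addresses $\alpha=(\alpha_n)_n$ and $\beta=(\beta_n)_n$, and let $k\geq0$ be the length of their longest common prefix, so $\alpha_{k+1}\neq\beta_{k+1}$. From $\Pi(\alpha)=\phi_{\alpha_1\cdots\alpha_k}(\Pi(\sigma^k\alpha))$ and the analogous identity for $\beta$, together with the injectivity of the conformal diffeomorphism $\phi_{\alpha_1\cdots\alpha_k}$ granted by (A2), I get $\Pi(\sigma^k\alpha)=\Pi(\sigma^k\beta)=:w$. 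Thus $w$ belongs to the set $D$ of limit points carrying two addresses that differ in the first coordinate, and $z=\phi_{\alpha_1\cdots\alpha_k}(w)$ (with $z=w$ when $k=0$). Since the finite words over the countable alphabet $I$ form a countable set, the inclusion $L(\Phi)\setminus L'(\Phi)\subseteq\bigcup_{k\geq0}\bigcup_{(i_1,\dots,i_k)\in I^k}\phi_{i_1\cdots i_k}(D)$ reduces the whole statement to the countability of $D$.

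Next I would locate $D$ inside a countable set. Take $w\in D$ with addresses $a=(a_n)_n$, $b=(b_n)_n$, $a_1\neq b_1$; then $w\in\phi_{a_1}(\Delta)\cap\phi_{b_1}(\Delta)$. Since each $\phi_i$ is injective and, being conformal on a neighborhood of $\Delta$ by (A2), maps ${\rm int}\Delta$ onto an open set, (A1) forbids $w$ from lying simultaneously in $\phi_{a_1}({\rm int}\Delta)$ and $\phi_{b_1}({\rm int}\Delta)$; hence $w\in\phi_{a_1}(\partial\Delta)$ or $w\in\phi_{b_1}(\partial\Delta)$. In the first case $\phi_{a_1}^{-1}(w)=\Pi(\sigma a)\in\partial\Delta$, while $\Pi(\sigma a)=\phi_{a_2}(\Pi(\sigma^2 a))\in\phi_{a_2}(\Delta)$, so $\Pi(\sigma a)\in\phi_{a_2}(\Delta)\cap\partial\Delta\subseteq\mathcal C$ and therefore $w\in\phi_{a_1}(\mathcal C)$; the second case gives $w\in\phi_{b_1}(\mathcal C)$ symmetrically. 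Hence $D\subseteq\bigcup_{i\in I}\phi_i(\mathcal C)$, a countable union of countable sets, so $D$ is countable and the lemma follows.

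The step I expect to be most delicate is the second one: one must argue that the two image memberships cannot both be interior, and then correctly recognize that the relevant \emph{tail} limit point $\Pi(\sigma a)$, being itself the one-step image $\phi_{a_2}(\Pi(\sigma^2 a))$, falls exactly into the boundary set $\mathcal C$ governed by (A4). This is the only place where (A4) enters, and it is easy to track the wrong point onto $\partial\Delta$.
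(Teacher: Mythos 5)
Your proposal is correct and takes essentially the same route as the paper's proof: both isolate the first index at which two addresses disagree, use (A1) to force the shifted limit point onto $\partial\Delta$, observe that this point also lies in a one-step image $\phi_i(\Delta)$ and hence in the countable set $\bigcup_{i\in I}(\phi_i(\Delta)\cap\partial\Delta)$ from (A4), and conclude that $L(\Phi)\setminus L'(\Phi)$ is contained in the countable union $\bigcup_{n\geq1}\bigcup_{\omega\in I^n}\phi_\omega\bigl(\bigcup_{i\in I}(\phi_i(\Delta)\cap\partial\Delta)\bigr)$. If anything, your write-up is slightly more explicit than the paper's in spelling out the symmetric alternative $w\in\phi_{b_1}(\partial\Delta)$, which the paper handles only implicitly.
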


\begin{proof}
Let $z\in L(\Phi)$ and suppose $z\notin \bigcup_{n=1}^\infty\bigcup_{\omega\in I^n}\phi_\omega\left(\bigcup_{i\in I} (\phi_i(\Delta)\cap \partial\Delta)\right)$.
If $z\notin L'(\Phi)$, then there exists
 $(j_n)_{n\in\mathbb N}\in I^\mathbb N$
 such that 
 $(i_n)_{n\in\mathbb N}\neq(j_n)_{n\in\mathbb N}$ and
 $z=\Pi((i_n)_{n\in\mathbb N})=\Pi((j_n)_{n\in\mathbb N})$. Put $k=\min\{n\in\mathbb N\colon i_n\neq j_n\}$. If $k>1$ then (A1) implies 
$(\phi_{i_1}\circ\cdots\circ\phi_{i_{k-1}})^{-1}(z ) \in\phi_{i_{k}}(\partial\Delta).$
Applying $\phi_{i_k}^{-1}$ we get
$(\phi_{i_1}\circ\cdots\circ\phi_{i_{k}})^{-1}(z )\in\partial\Delta.$
We also have
$(\phi_{i_1}\circ\cdots\circ\phi_{i_{k}})^{-1}(z )\in\phi_{i_{k+1}}(\Delta).$
Since $\phi_{i_{k+1}}(\Delta)\cap \partial\Delta=\emptyset$ by the hypothesis on $z$, a
 contradiction arises. If $k=1$ then an analogous argument yields a contradiction too. We have verified that
 $L(\Phi)\setminus L'(\Phi)$ is contained in $\bigcup_{n=1}^\infty\bigcup_{\omega\in I^n}\phi_\omega\left(\bigcup_{i\in I} (\phi_i(\Delta)\cap \partial\Delta)\right)$, which is a countable set by (A4).
\end{proof}

\section{Hausdorff dimension of sets for conformal IFSs}
The aim of this section is to prove Theorem~\ref{Main100}. In 
$\S$\ref{conv-sec} and $\S$\ref{low-sec} we establish upper and lower bounds on Hausdorff dimension  for an arbitrary $2$-decaying conformal IFS.
 A proof of the lower bound relies on the dimension theory of non-autonomous conformal IFSs summarized in $\S$\ref{ncifs}. In $\S$\ref{pf-thmc} 
 we complete the proof of Theorem~\ref{Main100}.

\subsection{The upper bound of Hausdorff dimension}\label{conv-sec}
The following upper bound is rather straightforward, from the properties of conformal IFSs and the definition of convergence exponent. 
\begin{pro}\label{upper} 
Let $\Phi=\{\phi_i\colon i\in I\}$ be a 
$2$-decaying conformal IFS on $\Delta$. For any infinite subset $S$ of $I$ we have \[\dim_{\rm H}F_\Phi(S)\leq\frac{\tau(|S|)}{2}.\]\end{pro}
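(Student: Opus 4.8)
The plan is to bound the Hausdorff dimension of $F_\Phi(S)$ from above by constructing, for each large index threshold, an efficient cover by cylinder sets and then estimating the associated sum of diameters raised to an appropriate power. Fix any $t > \tau(|S|)/2$; I will show that the $t$-dimensional Hausdorff measure of $F_\Phi(S)$ vanishes, which gives $\dim_{\rm H} F_\Phi(S) \le t$ and hence the claim by letting $t \downarrow \tau(|S|)/2$. By definition of the convergence exponent, $2t > \tau(|S|)$ implies $\sum_{i \in S} |i|^{-2t} < \infty$.

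First I would set up the covering. For every $z \in F_\Phi(S)$ the address $(i_n(z))_{n=1}^\infty$ lies in $S^{\mathbb N}$, so $F_\Phi(S) \subset \bigcup_{(i_1,\ldots,i_n) \in S^n} \phi_{i_1 \cdots i_n}(\Delta)$ for each fixed $n$; these cylinder images form a cover of $F_\Phi(S)$. The key geometric input is Lemma~\ref{IFS-lem}(b), which controls the diameters: $\operatorname{diam}(\phi_{i_1 \cdots i_n}(\Delta)) \le K_2 |D\phi_{i_1 \cdots i_n}(\zeta)|$. Combining this with the bounded-distortion estimate from Lemma~\ref{IFS-lem}(a) and the chain rule, I can compare $|D\phi_{i_1 \cdots i_n}(\zeta)|$ to the product $\prod_{k=1}^n |D\phi_{i_k}(\cdot)|$, and then the $2$-decaying hypothesis \eqref{CF-der} gives $|D\phi_{i_k}(z)| \le C_2 |i_k|^{-2}$ for each factor. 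The upshot is a bound of the form $\operatorname{diam}(\phi_{i_1 \cdots i_n}(\Delta)) \le C \prod_{k=1}^n |i_k|^{-2}$ for a constant $C$ depending only on $\Phi$.

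Next I would bound the sum of $t$-th powers of the diameters. Summing over all length-$n$ words in $S^n$ and using the product structure, the sum factorizes:
\[
\sum_{(i_1,\ldots,i_n)\in S^n} \operatorname{diam}(\phi_{i_1\cdots i_n}(\Delta))^t \le C^t \prod_{k=1}^n \left(\sum_{i\in S}|i|^{-2t}\right) = C^t \left(\sum_{i\in S}|i|^{-2t}\right)^{\!n}.
\]
To make this tend to $0$ I exploit the growth condition $\lim_{n\to\infty}|i_n(z)|=\infty$ built into $F_\Phi(S)$: points of $F_\Phi(S)$ have all but finitely many digits large, so I may instead cover $F_\Phi(S)$ by cylinders whose tail digits are restricted to $S_N = \{i \in S : |i| \ge N\}$ for large $N$. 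Since the tail of a convergent series is small, for $N$ large enough $\sum_{i \in S_N} |i|^{-2t} < 1$, making the geometric-type sum decay to $0$ as $n \to \infty$. Because the cylinder diameters also shrink uniformly to $0$ (by (A3)), this controls the Hausdorff premeasure at every scale and yields $\mathcal{H}^t(F_\Phi(S)) = 0$.

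The main obstacle I anticipate is handling the growth condition cleanly, since $F_\Phi(S)$ is not itself a self-similar cylinder set but only the subset on which digits diverge. The natural remedy is to write $F_\Phi(S) = \bigcup_{M} F^{(M,N)}$ where $F^{(M,N)}$ consists of points whose digits from position $M$ onward all lie in $S_N$; countable stability of Hausdorff dimension then reduces the problem to bounding each $F^{(M,N)}$, where the tail can be covered by cylinders with digits in $S_N$ (the finitely many initial digits contribute only a bounded multiplicative constant via bounded distortion). Choosing $N$ large enough that $\sum_{i \in S_N}|i|^{-2t} < 1$ makes the estimate work uniformly, and this is precisely where the definition of $\tau(|S|)$ is used in its sharp form.
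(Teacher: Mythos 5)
Your proposal follows essentially the same route as the paper's proof: restrict to a tail set of points whose digits all have modulus at least $N$, cover it by cylinders with digits in $S_N=\{i\in S\colon |i|\ge N\}$, bound the sums of diameters to the power $t$ via Lemma~\ref{IFS-lem} and the $2$-decaying estimate \eqref{CF-der}, and recover $F_\Phi(S)$ as a countable union of Lipschitz images of the restricted sets. The one bookkeeping slip is your claim that $\operatorname{diam}(\phi_{i_1\cdots i_n}(\Delta))\le C\prod_{k=1}^n|i_k|^{-2}$ with $C$ uniform in $n$ --- the chain rule actually gives a per-digit constant, i.e.\ $\operatorname{diam}(\phi_{i_1\cdots i_n}(\Delta))\le K_2C_2^{\,n}\prod_{k=1}^n|i_k|^{-2}$ --- but this is harmless since your own tail device repairs it: choose $N$ so that $C_2^{t}\sum_{i\in S_N}|i|^{-2t}<1$, which is exactly how the paper's condition \eqref{upper-eq1} absorbs the per-digit constant $K_0K_2C_2/K_1$ into the smallness of the tail sum.
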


\begin{proof}
Let $\zeta\in \Delta$ and $\delta>0$ be such that $B_{\delta}(\zeta)\subset\Delta$.
Let $\varepsilon>0$. 
Let $N\geq1$
be sufficiently large such that
\begin{equation}\label{upper-eq1}\left(\frac{K_0K_2C_2}{K_1}\right)^{(\tau(|S|)+\varepsilon)/2 }\sum_{\substack{ i\in S\\ |i|\geq N} }|i|^{-\tau(|S|)-\varepsilon}\leq1,\end{equation}
where $K_0$, $K_1=K_1(\delta)$, $K_2$ are the constants in Lemma~\ref{IFS-lem} and $C_2$ is the constant in \eqref{CF-der}.
From collections of level sets
$\phi_{i_1\cdots i_n}(\Delta)$ satisfying 
$i_k\in S$ and $|i_k|\geq N$ for every $1\leq k\leq n$, 
we choose coverings of the set
\[F_{\Phi,N}(S)=\{z\in F_\Phi(S)\colon |i_{n}(z)|\geq N\text{ for every }n\geq1 \},\] and use them to estimate the Hausdorff dimension of $F_{\Phi,N}(S)$ from above. 

 By (a), (b) in Lemma~\ref{IFS-lem} and \eqref{CF-der}, for every $n\in\mathbb N$ and every $(i_1,\ldots,i_{n+1})\in I^{n+1}$ we have
\[\begin{split}\frac{{\rm diam}(\phi_{i_1\ldots i_ni_{n+1}}(\Delta))}{{\rm diam}(\phi_{i_1,\ldots,i_n}(\Delta))}&\leq
\frac{K_2}{K_1}\frac{|D\phi_{i_1\cdots i_ni_{n+1}}(\zeta)|}{|D\phi_{i_1\cdots i_n}(\zeta)|}\\
&\leq \frac{K_0K_2}{K_1}|D\phi_{i_{n+1}}(\zeta)|\leq\frac{K_0K_2C_2}{K_1}|i_{n+1}|^{-2}.\end{split}\]
Then we have
\[\sum_{\substack{i_{n+1}\in S\\
|i_{n+1}|\geq N }}\frac{{\rm diam}(\phi_{i_1\cdots i_ni_{n+1}}(\Delta))^{ (\tau(|S|)+\varepsilon)/2  } }
{{\rm diam}(\phi_{i_1\cdots i_n}(\Delta))^{(\tau(|S|)+\varepsilon)/2}}\leq \left(\frac{K_0K_2C_2}{K_1}\right)^{(\tau(|S|)+\varepsilon)/2
     }\!\!\!\!\!\!\!\sum_{\substack{i_{n+1}\in S\\
|i_{n+1}|\geq N }}|i_{n+1}|^{-\tau(|S|)-\varepsilon},\]
which does not exceed $1$ by \eqref{upper-eq1}.
It follows that 
\[\frac{\sum_{\substack{i_1,\ldots,i_{n+1}\in S\\ |i_1|\geq N,\ldots,|i_{n+1}|\geq N } }{\rm diam}(\phi_{i_1\cdots i_{n+1}}(\Delta))^{(\tau(|S|)+\varepsilon)/2  } }{ \sum_{\substack{i_1,\ldots,i_{n}\in S\\ |i_1|\geq N,\ldots,|i_{n}|\geq N } }{\rm diam}(\phi_{i_1\cdots i_{n}}(\Delta))^{(\tau(|S|)+\varepsilon)/2}}\leq1,\]
and therefore
\[\sum_{\substack{i_1,\ldots,i_{n+1}\in S\\ |i_1|\geq N,\ldots,|i_{n+1}|\geq N } }
{\rm diam}(\phi_{i_1\cdots i_{n+1}}(\Delta))^{(\tau(|S|)+\varepsilon)/2 }\leq1.\]
Since $\sup\{{\rm diam}(\phi_{i_1\cdots i_n}(\Delta))\colon (i_1,\ldots, i_n)\in I^n\}\to0$ as $n\to\infty$ by Lemma~\ref{IFS-lem}(b) and (A3), we obtain
$\dim_{\rm H} F_{\Phi,N}(S)\leq(\tau(|S|)+\varepsilon)/2.$

Note that
$F_\Phi(S)\subset F_{\Phi,N}(S)\cup\bigcup_{n=1}^\infty\bigcup_{(i_1,\ldots, i_n)\in I^n} \phi_{i_1\cdots i_n}(F_{\Phi,N}(S))$.
Since each $\phi_i$ is Lipschitz continuous and Hausdorff dimensions do not change under the action of bi-Lipschitz homeomorphisms, it follows that
$\dim_{\rm H} F_\Phi(S)\leq \dim_{\rm H} F_{\Phi,N}(S)\leq(\tau(|S|)+\varepsilon)/2$. Since $\varepsilon>0$ is arbitrary, we obtain $\dim_{\rm H} F_\Phi(S)\leq\tau(|S|)/2$ as required.
\end{proof}

\subsection{Dimension theory for non-autonomous conformal IFSs}\label{ncifs}
For the sake of the lower bound in the proof of Theorem~\ref{Main100}, we summarize 
the dimension theory \cite{RU} for non-autonomous conformal IFS on the Euclidean space $\mathbb R^d$ of dimension $d\geq1$.
We note that $d=2$ throughout our application of this theory in $\S$\ref{low-sec}.

Let $W\subset \mathbb{R}^d$ be an open set and let $\phi\colon W\rightarrow \phi(W)$ be a diffeomorphism. We say $\phi$ is {\it conformal} if for any $x\in W$ the differential $D\phi(x)\colon\mathbb{R}^d\rightarrow \mathbb{R}^d$ is a similarity linear map: 
$D\phi(x)=c_x\cdot M_x$ where $c_x>0$ is a scaling factor at $x$ and $M_x$ is a $d\times d$ orthogonal matrix. For a conformal map $\phi\colon W\to\phi(W)$
and a set $A\subset W$, we set
\[\Vert D\phi\Vert_A=\sup\{|D\phi(x)|\colon x\in A\},\]
where $|D\phi(x)|$ denotes the scaling factor of $\phi$ at $x$. 



For each $n\in \mathbb N$ let $I^{(n)}$
be a finite set. We introduce index sets
\begin{equation}\label{index-set}I^{\infty}=\prod_{j=1}^{\infty}I^{(j)}, \text{ and } I^q_n=\prod_{j=n}^q I^{(j)} \text{ for an integer }q\geq n. 
\end{equation}
For each $n\in\mathbb N$
let $\{\phi_i^{(n)}\colon i\in I^{(n)}\}$ be a finite collection 
of self maps of $\Delta$. 
For $\omega=i_1i_2\cdots\in I^\infty$ and
 $n, q\in \mathbb{N}$ with $n\leq q$, we set \begin{equation}\label{indexII}\omega|_n^{q}=i_n\cdots i_{q}\in I_n^{q}\ \text{ and }\
\phi_{\omega|_n^q}=\phi_{i_n}^{(n)}\circ\cdots\circ \phi_{i_{q}}^{(q)}.\end{equation}

 A {\it non-autonomous conformal IFS} on $\Delta$ is a sequence 
$\Phi=(\Phi^{(n)})_{n=1}^{\infty}$,
$\Phi^{(n)}=\{\phi_i^{(n)}\colon i\in I^{(n)}\}$
of collections of self maps of $\Delta$ 
which satisfy the following four conditions:
\begin{itemize}
\item[(B1)] (open set condition) For every $n\in\mathbb N$ and any pair $i,j$ of distinct indices in $I^{(n)}$,  
\[
\phi_i^{(n)}({\rm int}\Delta)\cap \phi_j^{(n)}({\rm int}\Delta)=\emptyset.
\]

\item[(B2)] (conformality) There exists a connected open set $\widetilde \Delta$ of $\mathbb R^d$ containing $\Delta$ 
such that each $\phi_i^{(n)}$ extends to a $C^{1}$ conformal diffeomorphism $\widetilde{\phi}_i^{(n)}\colon\widetilde \Delta\to\tilde{\phi}_i^{(n)}(\widetilde \Delta)\subset\widetilde \Delta$.
With a slight abuse of notation
we write $\phi_i^{(n)}$ for $\widetilde\phi_i^{(n)}$.
\item[(B3)] (bounded distortion) 
There exists $K\ge 1$ such that for all $\omega\in I^\infty$ and all $n$, $q\in \mathbb{N}$ with $n\leq q$, 
\[
|D\phi_{\omega|_n^q}(x_1)|\le K|D\phi_{\omega|_n^q}(x_2)|\ \text{ for all }x_1,x_2\in \widetilde \Delta.
\]

\item[(B4)] (uniform contraction) 
There are constants $0< \gamma <1$ and $L\geq1$ such that for all $\omega\in I^{\infty}$ and all $n,q\in\mathbb N$ with
$q-n\geq L$, 
\[
\|D\phi_{\omega|_n^q}\|_{X}\le\gamma^{q-n+1}.
\]
\end{itemize}


Let $\Phi=(\Phi^{(n)})_{n=1}^{\infty}$ be a non-autonomous conformal IFS on $\Delta$.
Condition (B4) ensures that the set $\bigcap_{n=1}^{\infty}\phi_{\omega|_{1}^n}(\Delta)$ is a singleton for each $\omega\in I^{\infty}$.
We define an
 {\it address map} $\Pi \colon I^{\infty} \to \Delta$ by
\[\Pi(\omega)\in \bigcap_{n=1}^{\infty}\phi_{\omega|_{1}^n}(\Delta),\]
and the {\it limit set} by 
 \[
\Lambda(\Phi)=\Pi(I^{\infty}).
\]
For $s\geq0$ 
we introduce a partition function
\[
Z_n^{\Phi}(s)=\sum_{\omega\in I_{1}^n}(\Vert D\phi_{\omega}\Vert_{\Delta})^s,
\]
and
a lower pressure function
$\underline{P}^\Phi\colon[0,\infty)\to [-\infty,\infty]$ of $\Phi$ by
\[\underline{P}^{\Phi}(s)=\liminf\limits_{n\rightarrow \infty}\frac{1}{n}\log Z_n^{\Phi}(s).\]
The lower pressure function has the following monotonicity
\cite[Lemma~2.6]{RU}:
if $0\leq s_1<s_2$ then $\underline{P}^\Phi(s_1)=\underline{P}^\Phi(s_2)=\infty$ or
$\underline{P}^\Phi(s_1)=\underline{P}^\Phi(s_2)=-\infty$ or $\underline{P}^\Phi(s_1)>\underline{P}^\Phi(s_2)$. So, one can  define a critical value
\[s(\Phi)={\rm sup}\{s\ge0\colon  \underline{P}^\Phi(s)>0\}={\rm inf}\{s\ge0\colon \underline{P}^\Phi(s)<0\},
\]
called the {\it Bowen dimension}. 
We say the non-autonomous conformal IFS $\Phi$ is {\it subexponentially bounded} if
\[
\lim_{n \to \infty}\frac{1}{n}\log \# I^{(n)}=0.
\]

\begin{thm}[{\cite[Theorem~1.1]{RU}}]
\label{Bowen}
Let $\Phi$ be a non-autonomous conformal IFS that is subexponentially bounded.
Then  
\[\dim_{\rm H}\Lambda(\Phi)=s(\Phi).
\]
\end{thm}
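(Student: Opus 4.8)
The plan is to establish the two inequalities $\dim_{\rm H}\Lambda(\Phi)\le s(\Phi)$ and $\dim_{\rm H}\Lambda(\Phi)\ge s(\Phi)$ separately, following the template of Bowen's formula while tracking the dependence on the level $n$. Two structural facts will be used repeatedly. First, each level set is almost round: by the convexity of $\Delta$ together with (B3) and Lemma~\ref{conf-lem}, there are constants $0<a\le b$ and $a'>0$, independent of $n$ and $\omega$, such that
\[a\,\|D\phi_{\omega|_1^n}\|_{\Delta}\le{\rm diam}\bigl(\phi_{\omega|_1^n}(\Delta)\bigr)\le b\,\|D\phi_{\omega|_1^n}\|_{\Delta}\]
and $\phi_{\omega|_1^n}(\Delta)$ contains a ball of radius $a'\|D\phi_{\omega|_1^n}\|_{\Delta}$. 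Second, the partition function is almost multiplicative: by (B3) and the chain rule, for $n\le q$ and $\omega\in I^\infty$,
\[K^{-s}\|D\phi_{\omega|_1^n}\|_{\Delta}^s\,\|D\phi_{\omega|_{n+1}^q}\|_{\Delta}^s\le\|D\phi_{\omega|_1^q}\|_{\Delta}^s\le K^{s}\|D\phi_{\omega|_1^n}\|_{\Delta}^s\,\|D\phi_{\omega|_{n+1}^q}\|_{\Delta}^s,\]
so that $Z_q^\Phi(s)$ is comparable to the product of $Z_n^\Phi(s)$ and the shifted partition function $\sum_{\eta\in I_{n+1}^q}\|D\phi_\eta\|_\Delta^s$.

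For the upper bound, fix $s>s(\Phi)$, so $\underline P^\Phi(s)<0$ and hence $Z_{n_k}^\Phi(s)\to0$ along some subsequence $(n_k)_k$. The family $\{\phi_{\omega|_1^{n}}(\Delta)\colon\omega\in I_1^{n}\}$ covers $\Lambda(\Phi)$, and by (B4) the maximal diameter $\delta_n$ of its members tends to $0$. Using almost-roundness,
\[\mathcal H_{\delta_{n_k}}^s(\Lambda(\Phi))\le\sum_{\omega\in I_1^{n_k}}{\rm diam}\bigl(\phi_{\omega}(\Delta)\bigr)^s\le b^s Z_{n_k}^\Phi(s)\longrightarrow0,\]
so $\mathcal H^s(\Lambda(\Phi))=0$ and $\dim_{\rm H}\Lambda(\Phi)\le s$. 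Letting $s\downarrow s(\Phi)$ gives the bound; note that subexponential boundedness is not needed here.

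For the lower bound, fix $s<s(\Phi)$, so $\underline P^\Phi(s)>0$ and $Z_n^\Phi(s)\to\infty$. I construct a measure by the usual weighting: let $\mu_n$ be the probability measure assigning mass $\|D\phi_{\omega|_1^n}\|_\Delta^s/Z_n^\Phi(s)$ to a representative point of each level-$n$ cylinder, and let $\mu$ be a weak-$*$ subsequential limit of $(\mu_n)$, which is supported on $\Lambda(\Phi)$. Almost multiplicativity makes the mass estimate level-independent: for $\eta\in I_1^m$,
\[\mu_n\Bigl(\bigcup_{\omega|_1^m=\eta}\phi_{\omega|_1^n}(\Delta)\Bigr)=\frac{\sum_{\zeta\in I_{m+1}^n}\|D\phi_{\eta\zeta}\|_\Delta^s}{Z_n^\Phi(s)}\le C\,\frac{\|D\phi_\eta\|_\Delta^s}{Z_m^\Phi(s)},\]
since the shifted partition function $\sum_{\zeta\in I_{m+1}^n}\|D\phi_\zeta\|_\Delta^s$ cancels between numerator and denominator. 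Passing to the limit and using $Z_m^\Phi(s)\ge1$ for large $m$ yields $\mu(\phi_\eta(\Delta))\le C\,{\rm diam}(\phi_\eta(\Delta))^s$. It then remains to run the mass distribution principle: I bound $\mu(B(x,r))\le C'r^s$ for small $r$ by covering $B(x,r)\cap\Lambda(\Phi)$ with the antichain of cylinders maximal subject to having diameter at most $r$, and controlling their overlap through (B1) and the inner balls furnished by almost-roundness. This gives $\dim_{\rm H}\Lambda(\Phi)\ge s$, and letting $s\uparrow s(\Phi)$ finishes the proof.

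The hard part will be the passage from the cylinder estimate to the ball estimate in the lower bound. In contrast to the one-dimensional or uniformly contracting settings, the index sets $I^{(n)}$ may grow with $n$ and the single-step contraction ratios $\|D\phi_i^{(n)}\|_\Delta$ are not bounded below, so a ball $B(x,r)$ may meet cylinders of vastly different sizes, and the number of maximal cylinders of diameter at most $r$ meeting it need not be uniformly bounded. This is exactly where subexponential boundedness, $\tfrac1n\log\#I^{(n)}\to0$, enters: it forces the combinatorial proliferation of cylinders to be slower than any exponential and therefore unable to corrupt the exponential rate recorded by $Z_n^\Phi(s)$, so the mass distribution estimate survives with the exponent $s$ intact. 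Balancing this combinatorial growth against the geometric bounded-overlap coming from (B1) and almost-roundness is the crux of the argument and the point at which the non-autonomous theory genuinely departs from the classical Bowen formula.
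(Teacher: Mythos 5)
First, a point of comparison: the paper does not prove Theorem~\ref{Bowen} at all --- it is imported verbatim from Rempe-Gillen and Urba\'nski \cite{RU} --- so your attempt has to be measured against the proof in \cite{RU}. Your upper bound is correct and essentially the standard covering argument used there: almost-roundness of level sets, mesh tending to zero by (B4), and $Z_{n_k}^\Phi(s)\to0$ along a subsequence when $\underline P^\Phi(s)<0$; you are also right that subexponential boundedness plays no role in this half.

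The genuine gap is in the lower bound, at precisely the step you defer to the last paragraph. The cylinder estimate $\mu(\phi_\eta(\Delta))\le C\,{\rm diam}(\phi_\eta(\Delta))^s$ does not convert into $\mu(B(x,r))\le C'r^s$ via the maximal antichain of cylinders of diameter at most $r$: since no lower bound on the single-step derivatives $\Vert D\phi_i^{(n)}\Vert_\Delta$ is assumed, the diameter drop from a parent cylinder to a child is uncontrolled, so a maximal cylinder of diameter at most $r$ may be vastly smaller than $r$. The volume argument from (B1) and the inner balls of Lemma~\ref{conf-lem} bounds only the number of such cylinders of diameter \emph{comparable} to $r$ meeting $B(x,r)$, not the total number, and hence not their total mass. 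Invoking subexponential boundedness by name supplies no mechanism here: the cylinders meeting $B(x,r)$ live at many different levels $n$, so the hypothesis $\frac1n\log\#I^{(n)}\to0$ cannot be applied as stated. What \cite{RU} actually do is regularize first: for a given $\varepsilon>0$ they discard from each $I^{(n)}$ the maps whose derivative falls below a subexponential threshold --- affordable precisely because there are only subexponentially many indices per level, so the lower pressure drops by at most $O(\varepsilon)$ --- and then run a Moran-set construction on the pruned system, where parent-to-child ratios decay subexponentially and the ball-versus-cylinder comparison goes through with an $\varepsilon$-loss in the exponent. In particular, even after this repair one obtains $\mu(B(x,r))\le r^{s-\varepsilon}$ for small $r$, not the clean bound $C'r^s$ you assert; this is harmless for the dimension formula after letting $\varepsilon\to0$, but your write-up claims more than the method can deliver. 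A further technical flaw: for a weak-$*$ limit, Portmanteau gives $\limsup_n\mu_n(F)\le\mu(F)$ for closed $F$, which is the wrong direction for transferring your upper mass bounds on cylinders to $\mu$ (mass can migrate into $\phi_\eta(\Delta)$ from neighboring cylinders in the limit); the standard fix is to build the measure on the compact symbolic space $I^\infty$ by Kolmogorov extension and push it forward by $\Pi$.
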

This type of formula, first established  in \cite{Bow79} for Fuchsian groups without parabolic elements, is called Bowen's formula. Bowen's formula is well-known for conformal IFSs \cite{MU96,MauUrb}. Theorem~\ref{Bowen} is an extension to non-autonomous conformal IFSs.


\subsection{The lower bound of Hausdorff dimension}\label{low-sec}
Using the dimension theory of non-autonomous conformal IFSs summarized in $\S$\ref{ncifs}, we prove
the following lower bound on Hausdorff dimension.
\begin{pro}
\label{lower}
Let $\Phi=\{\phi_i\colon i\in I\}$ be a 
$2$-decaying conformal IFS on $\Delta$.  Let $S$ be an infinite subset of $I$ and let $f\colon\mathbb{N}\rightarrow [\min |S|, \infty)$ satisfy $\lim_{n\to \infty}f(n)=\infty.$ 
Then \[\dim_{\rm H}F_\Phi(S, f)\ge \frac{\tau(|S|)}{2}.\]
\end{pro}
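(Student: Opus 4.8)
The plan is to realize a full-dimensional part of $F_\Phi(S,f)$ as the limit set of a well-chosen non-autonomous conformal IFS, and then to extract the lower bound from the non-autonomous Bowen formula (Theorem~\ref{Bowen}). Fix $s$ with $0\le s<\tau(|S|)/2$; it suffices to build a subexponentially bounded non-autonomous conformal IFS $\Psi=\Psi_s$, each of whose level collections is a sub-collection of $\{\phi_i\colon i\in I\}$, such that $\Lambda(\Psi)\subset F_\Phi(S,f)$ up to a countable set and $s(\Psi)\ge s$. Letting $s\uparrow\tau(|S|)/2$ then gives the proposition.

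For the construction, note that since $2s<\tau(|S|)$ the defining property of the convergence exponent gives $\sum_{k\in|S|}k^{-2s}=\infty$; as deleting finitely many terms preserves divergence, for $\Lambda:=2C_1^{-s}$ (with $C_1$ from \eqref{CF-der}) I can choose thresholds $1=A_0<A_1<A_2<\cdots\to\infty$ with $\sum_{k\in|S|,\,A_{j-1}\le k<A_j}k^{-2s}\ge\Lambda$ for every $j\ge1$. For each large level $n$ I then put $j(n)=\max\{j\ge1\colon A_j\le\min(f(n),n)\}$, which tends to infinity because $\min(f(n),n)\to\infty$, and define $I^{(n)}=\{i\in S\colon A_{j(n)-1}\le|i|<A_{j(n)}\}$ with $\phi_i^{(n)}=\phi_i$; for the finitely many remaining $n$, I take $I^{(n)}$ to be a single index of modulus $\min|S|$. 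The bound $A_{j(n)}\le f(n)$ will keep us inside the $f$-constraint, while $A_{j(n)}\le n$ will deliver subexponential boundedness. Conditions (B1), (B2) are inherited from (A1), (A2); (B3) is exactly Lemma~\ref{IFS-lem}(a); and (B4) follows from the $m$-step contraction (A3) by grouping a long composition into blocks of length $m$.

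For the dimension estimate, the $2$-decaying lower bound in \eqref{CF-der} gives $\Vert D\phi_\omega\Vert_\Delta\ge|D\phi_\omega(\zeta)|\ge C_1^{\,n}\prod_{k=1}^n|i_k|^{-2}$ for every $\omega=(i_k)\in I_1^n$, whence
\[
Z_n^{\Psi}(s)\ge C_1^{\,ns}\prod_{k=1}^n\Big(\sum_{i\in I^{(k)}}|i|^{-2s}\Big)\ge c_0\,(C_1^{\,s}\Lambda)^n=c_0\,2^n,
\]
where $c_0>0$ absorbs the finitely many initial levels and where I only use $\sum_{i\in I^{(k)}}|i|^{-2s}\ge\sum_{k\in|S|\cap[A_{j-1},A_j)}k^{-2s}\ge\Lambda$, i.e.\ the divergence of the \emph{set} sum, with no control of multiplicities needed. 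Consequently $\underline P^{\Psi}(s)=\liminf_n\tfrac1n\log Z_n^{\Psi}(s)\ge\log2>0$, so $s(\Psi)\ge s$. Moreover $\#I^{(n)}$ is bounded by the number of indices with $|i|<A_{j(n)}$, which by disjointness of the images $\phi_i({\rm int}\,\Delta)$ together with \eqref{CF-der} is at most a fixed power of $A_{j(n)}$; since $A_{j(n)}\le n$, this yields $\tfrac1n\log\#I^{(n)}\to0$, so $\Psi$ is subexponentially bounded and Theorem~\ref{Bowen} applies: $\dim_{\rm H}\Lambda(\Psi)=s(\Psi)\ge s$.

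It remains to compare $\Lambda(\Psi)$ with $F_\Phi(S,f)$. Because the maps of $\Psi$ are maps of $\Phi$, every $z\in\Lambda(\Psi)$ lies in $L(\Phi)$ and admits the address $(i_n)\in\prod_n I^{(n)}$, for which $i_n\in S$, $|i_n|<A_{j(n)}\le f(n)$, and $|i_n|\ge A_{j(n)-1}\to\infty$. For those $z$ that additionally lie in $L'(\Phi)$ this address is the unique one, so $z\in F_\Phi(S,f)$; by (A4) and Lemma~\ref{d-unique} the exceptional set $\Lambda(\Psi)\setminus L'(\Phi)$ is countable and hence of Hausdorff dimension $0$. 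Therefore $\dim_{\rm H}F_\Phi(S,f)\ge\dim_{\rm H}\Lambda(\Psi)\ge s$, and letting $s\uparrow\tau(|S|)/2$ finishes the proof. The main obstacle is the simultaneous reconciliation of the three competing demands---that $|i_n|\to\infty$ while $|i_n|\le f(n)$, that the pressure stay positive at $s$, and that $\Psi$ be subexponentially bounded; the last is the delicate one, since $f$ may grow arbitrarily fast, and it is precisely what forces the stagewise design in which a fixed moderate window of indices is reused over many consecutive levels.
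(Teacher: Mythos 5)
Your proposal is correct and follows the same skeleton as the paper's own proof: partition $S$ into annular blocks whose sums at an exponent just below the critical one are bounded below, reuse each block over a stretch of levels compatible with $f$, note via Lemma~\ref{d-unique} that the limit set of the resulting non-autonomous system lies in $F_\Phi(S,f)$ up to a countable (hence dimension-zero) set, bound the partition function from below, and invoke the Bowen formula of Rempe-Gillen and Urba\'nski (Theorem~\ref{Bowen}). Two implementation choices genuinely differ, and both of yours are defensible. First, to neutralize the per-level constant $C_1^s$, the paper perturbs the exponent, using $C_1|i|^{-2}\ge|i|^{-(2+\delta)}$ for $|i|$ large and evaluating the pressure at $s_{\varepsilon,\delta}=(\tau(|S|)-\varepsilon)/(2+\delta)$ to get $\underline{P}\ge0$, then sends $\delta\to0$ and $\varepsilon\to0$; you instead inflate the block sums to $\Lambda=2C_1^{-s}$, which yields the cleaner conclusion $\underline{P}^{\Psi}(s)\ge\log2>0$ with one fewer limiting parameter. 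Second, for subexponential boundedness the paper chooses the durations $t_m$ inductively so that $\log\#S_m/\sum_{j\le m}t_j\to0$ as in \eqref{hutou2}, whereas you cap the active window by $A_{j(n)}\le n$ and appeal to a polynomial counting bound $\#\{i\in I\colon|i|\le R\}=O(R^4)$. That counting bound is the one step you leave telegraphic, and it should be recorded since the paper never needs it: by Lemma~\ref{conf-lem} together with Lemma~\ref{IFS-lem}(a) and \eqref{CF-der}, each $\phi_i(\Delta)$ contains a ball of radius at least $c|i|^{-2}$ about $\phi_i(\zeta)$; by (A1) these balls have pairwise disjoint interiors inside the bounded set $\Delta$, so comparing areas gives the claimed $O(R^4)$, which also shows each $I^{(n)}$ is finite, as the framework of $\S$\ref{ncifs} requires. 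One last small point: your verification of (B4) by grouping compositions into $m$-blocks from (A3) needs a uniform bound on the single-step derivatives of the maps actually used; this is supplied by $\|D\phi_i\|_{\Delta}\le C_2/(\min|S|)^2$ from \eqref{CF-der}, so the argument closes.
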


\begin{proof}An idea is to
 extract a family of non-autonomous conformal IFSs on $\Delta$ from $\Phi$
 whose limit sets are of Hausdorff dimension approximately equal to $\tau(|S|)/2$. This idea traces back to \cite{NT}.
We may assume $\tau(|S|)>0$, for otherwise there is nothing to prove.
Let $\varepsilon\in (0, \tau(|S|))$. 

We choose a sequence $(z_m)_{m\in \mathbb{N}}$ in $S$ inductively as follows:
 \begin{equation}\label{cccc}|z_1|=\min\{|i|\colon i\in S\}.\end{equation} $|z_{m+1}| > |z_m|$
for every $m\geq1$ and
\begin{equation}
\label{aaaa}
\sum_{\substack{i\in S\\|z_m|\le |i |< |z_{m+1}|}}|i |^{-\tau(|S|)+\varepsilon}\ge 1. 
\end{equation}
Set 
\begin{equation}
\label{defbm}
 S_m=
\begin{cases}
 \{i\in S\colon|i|=|z_1|\}&\text{for } m=1,\\

\{i\in S\colon|z_m|\le |i|< |z_{m+1}|\}&\text{for }m\ge 2.
 \end{cases}
 \end{equation}
Let $(t_m)_{m\in \mathbb{N}}$ be a sequence of positive integers such that for every $m\ge 2$ we have
\begin{equation}
\label{hutou}
|z_{m+1}|\le \inf\left\{f(n)\colon \sum_{j=1}^{m-1}t_j+1\le n \le  \sum_{j=1}^{m}t_j\right\},
\end{equation}
and
\begin{equation}
\label{hutou2}
\lim_{m\to \infty}\frac{\log \# S_m}{ \sum_{j=1}^{m}t_j}=\lim_{m\to \infty}\frac{\log \# S_m}{ \sum_{j=1}^{m-1}t_j+1}=0.
\end{equation}
Since $\lim_{n\to \infty} f(n) = \infty$, one can choose such a sequence by induction on $m$. 

For each integer $1\leq n\leq t_1$, we set
\[I^{(n)}=S_1.\]
For each 
integer $n\geq t_1+1$ we pick $m\ge 2$ such that $\sum_{j=1}^{m-1}t_j+1\le n \le  \sum_{j=1}^{m}t_j$,
and set \[I^{(n)}=S_m.\] Put 
$I^{\infty}=\prod_{j=1}^{\infty}I^{(j)}$ and
$I^k_n=\prod_{j=n}^k I^{(j)}$
as in \eqref{index-set}. 
For $n\geq t_1+1$ we have \[I_1^n=(S_1)^{t_1}\times \cdots \times (S_{m-1})^{t_{m-1}}\times 
(S_m)^{l_n},\]
where $l_n=n-\sum_{j=1}^{m-1}t_j$ and  
\[I^{\infty}=(S_1)^{t_1}\times (S_2)^{t_2}\times \cdots \times (S_m)^{t_m}\times \cdots.\] 
Let $n\in\mathbb N$. For each $i\in I^{(n)}$ we put
$\phi_{i}^{(n)}=\phi_{i}$,
and set
\[\Psi^{(n)}=\{\phi^{(n)}_{i}\colon i\in I^{(n)}\}.\] 
For $\omega=i_1i_2\cdots\in I^\infty$ and
 $n, q\in \mathbb{N}$ with $n\leq q$, we set $\omega|_n^{q}=i_n\cdots i_{q}\in I_n^{q}$ and 
$\phi_{\omega|_n^q}=\phi_{i_n}^{(n)}\circ\cdots\circ \phi_{i_{q}}^{(q)}$
as in \eqref{indexII}.
Finally we set
\[\Psi=(\Psi^{(n)})_{n=1}^\infty.\]
Clearly $\Psi$ is a non-autonomous conformal IFS on $\Delta$. 
\begin{lem}\label{NCIFS-lem} The non-autonomous conformal IFS $\Psi$ is subexponentially bounded, and $\Lambda(\Psi)$ 
is contained in $F_\Phi(S, f)$.
\end{lem}

\begin{proof} 







Recall that for each integer $n\geq t_1+1$ we have 
 $\sum_{j=1}^{m-1}t_j+1\le n $.
Then
\[0\leq \frac{1}{n}\log \# I^{(n)}\le \frac{\log \# S_m}{\sum_{j=1}^{m-1}t_j+1}.\]
From this and \eqref{hutou2} we obtain
$\lim_{n\to \infty}(1/n)\log\# I^{(n)}=0$, namely $\Psi$ is subexponentially bounded.


Let $z\in\Lambda(\Psi)$.
Since
there exists $(i_n)_{n\in\mathbb N}\in I^\mathbb N$ such that 
$i_n\in S$ for every $n\geq1$ and $\Pi((i_n)_{n\in\mathbb N})=z$,
 Lemma~\ref{d-unique} implies $z\in L'(\Phi)$ except for countable number of points.
The first alternative of \eqref{defbm} gives \[|i_{n}(z)|=|z_1|\le f(n)\ \text{ for every }1\leq n\leq t_1.\] For every $m\ge 2$, the second alternative of \eqref{defbm} and \eqref{hutou} yield 
\[|z_m|\le |i_{n}(z)|<|z_{m+1}|\le f(n)\ \text{ for every } \sum_{j=1}^{m-1}t_j+1\leq n\leq \sum_{j=1}^{m}t_j.\] 
As $n\to\infty$ we have $m\to\infty$, $|z_m|\to\infty$ and so $|i_{n}(z)|\to \infty$.
Hence we obtain
$z\in F_\Phi(S, f)
$.
\end{proof}

Recall that the lower pressure function $\underline{P}^\Psi\colon[0,\infty)\to[-\infty,\infty]$ 
  is
 given by \[\underline{P}^\Psi(s)=\liminf\limits_{n\rightarrow \infty}\frac{1}{n}\log Z_n^\Psi(s),\text{ where }
 Z_n^\Psi(s)=\sum_{\omega\in I_1^n}(\Vert D\phi_{\omega}\Vert_{\Delta })^s.\]
 By Lemma~\ref{NCIFS-lem}
and Theorem~\ref{Bowen},
the Bowen dimension $s(\Psi)$ satisfies 
\begin{equation}\label{eqp-1}\dim_{\rm H}F_\Phi(S,f)\geq\dim_{\rm H}\Lambda(\Psi)=
s(\Psi).\end{equation}
In order to estimate the Bowen dimension from below, we estimate the lower pressure function from below.
By \eqref{CF-der}, for every $n\geq1$ and every $i\in I^{(n)}$ we have
\begin{equation}
\label{control}
\min_{z\in\Delta}|D\phi_{i}^{(n)}(z)| \ge \frac{C_1}{|i|^2}.
\end{equation}
Since $|z_m|\to\infty$ as $m\to\infty$, for any $\delta>0$ there exists $N\geq1$ such that for all $i\in I$ with $|i|\geq |z_{N+1}|$ we have
\begin{equation}\label{control2}
\frac{C_1 }{|i|^2}\geq \frac{1}{|i|^{2+\delta}}.\end{equation}
Using the chain rule,  \eqref{control} and \eqref{control2} 
we have
\[\begin{split}
Z_n^\Psi(s)=&
\sum_{(i_1,\ldots,i_n)\in I_1^n}(\Vert D(\phi_{i_1}^{(1)}\circ \phi_{i_2}^{(2)}\circ\cdots \circ \phi_{i_n}^{(n)})\Vert_{\Delta })^s\\\ge& 
C_1^{Ns}\sum_{(i_1,\ldots,i_n)\in I_1^n}{|i_1|}^{-2s}\cdots
{|i_{t_1+\cdots +t_N}|}^{-2s}{|i_{t_1+\cdots +t_N+1}|}^{-(2+\delta)s}\cdots{|i_n|}^{-(2+\delta)s}
\\=& 
C_1^{Ns}|z_1|^{-2st_1}\left(\sum_{i\in S_2}|i|^{-2s}\right)^{t_2}\cdots \left(\sum_{i\in S_{N}}|i|^{-2s}\right)^{t_{N}}\\
&\times\left(\sum_{i\in S_{N+1}}|i|^{-(2+\delta)s}\right)^{t_{N+1}}\cdots\left(\sum_{i\in S_m}|i|^{-(2+\delta)s}\right)^{l_n}
.
\end{split}\]
Set
$s_{\varepsilon,\delta}=(\tau(|S|)-\varepsilon)/(2+\delta)$. By \eqref{aaaa}
we have \[\left(\sum_{i\in S_{N+1}}|i|^{-(2+\delta)s_{\varepsilon,\delta}}\right)^{t_{N+1}}\cdots \left(\sum_{i\in S_m}|i|^{-(2+\delta)s_{\varepsilon,\delta}}\right)^{l_n}\geq1.\]
Substituting $s=s_{\varepsilon,\delta}$ and combining the above two inequalities yield
\[Z_n^\Psi(s_{\varepsilon,\delta})\ge C_1^{Ns_{\varepsilon,\delta}}|z_1|^{-2s_{\varepsilon,\delta}t_1}\left(\sum_{i\in S_2}|i|^{-2s_{\varepsilon,\delta}}\right)^{t_2}\cdots \left(\sum_{i\in S_{N}}|i|^{-2s_{\varepsilon,\delta}}\right)^{t_{N}}.\]
Since the right-hand side is independent of $n$, it follows that $\underline{P}^\Psi(s_{\varepsilon,\delta})
\geq0$, and hence 
\begin{equation}\label{eqp-2}
s(\Psi)\ge \frac{\tau(|S|)-\varepsilon}{2+\delta}.
\end{equation}
Combining \eqref{eqp-1} and \eqref{eqp-2}, and 
 letting $\delta\to0$ and then $\varepsilon\to0$ yields
the desired inequality 
in Proposition~\ref{lower}.
\end{proof}

\subsection{Proof of Theorem~\ref{Main100}}\label{pf-thmc}
Let $\Phi=\{\phi_i\colon i\in I\}$ be a $2$-decaying 
conformal IFS on $\Delta$ and let $S$ be an infinite subset of $I$.
Proposition~\ref{upper} gives
$\dim_{\rm H}F_\Phi(S)\leq\tau(|S|)/2$.
Let $f\colon\mathbb{N}\rightarrow [\min |S|, \infty)$ satisfy $\lim_{n\to \infty}f(n)=\infty.$ 
Proposition~\ref{lower} gives $\dim_{\rm H}F_\Phi(S, f)\ge \tau(|S|)/2$.
Since $F_\Phi(S,f)\subset F_\Phi(S)$, the desired equalities hold. \qed
\section{Proofs of the main results}
 
Returning to the Hurwitz continued fraction,
in $\S$\ref{basic-def} we construct a $2$-decaying conformal IFS associated with it.
In $\S$\ref{mainpf} we apply Theorem~\ref{Main100} to this IFS and 
  complete the proofs of Theorems~\ref{Main1} and \ref{Main2}. 
 In $\S$\ref{pfthmcor} we prove Theorem~\ref{Main-cor}.

\subsection{Conformal IFS associated with the Hurwitz continued fraction}\label{basic-def}
Let
$\mathbb D_{1}=\{(k,\ell)\in\mathbb Z^2\colon k^2+\ell^2\geq 2\}$ 
and
$\mathbb D_{2}=\{(k,\ell)\in\mathbb Z^2\colon k^2+\ell^2\geq 8\}.$
For each $i=1,2$ and $n\in\mathbb N$ 
let $\mathbb D_i^n$ denote the set of $n$-strings of elements of $\mathbb D_i$.

For each $(k,\ell)\in\mathbb D_1$ 
define a domain
 \[U_{k,\ell}=\left\{\frac{1}{x+\sqrt{-1}y}\in\mathbb C\colon -\frac{1}{2}\leq x-k<\frac{1}{2},\ -\frac{1}{2}\leq y-\ell<\frac{1}{2} \right\},\]
 which intersects $U$ and
 is bordered by the four circles $(x-1/(2k\pm1))^2+y^2=(1/(2k\pm 1))^2$, $x^2+(y+ 1/(2\ell\pm1))^2=(1/(2\ell\pm1))^2$
 (double sign corresponds) through the origin as indicated in \textsc{Figure}~\ref{fig1}.
 Clearly we have
 \begin{itemize}
 \item[(H1)] if $(k_1,\ell_1)\neq (k_2,\ell_2)$ then $U_{k_1,\ell_1}\cap U_{k_2,\ell_2}=\emptyset$.
\end{itemize}

For $n\in\mathbb N$,
$(c_1,\ldots, c_n)\in  \mathbb D_1^n$ define  an {\it $n$-th cylinder}
\[[c_1,\ldots, c_n]=\{z\in U\colon\ c_i(z)\text{ is well-defined and }c_i(z)=c_i\text{ for every }1\leq i\leq n\}.\]
This
is the set of elements of $U$ which have the finite or infinite Hurwitz continued fraction expansion \eqref{H-exp} beginning with $c_1,\ldots,c_n$. It is not difficult to see the following properties:

\begin{itemize}  
\item[(H2)] each $1$-cylinder 
$[c_1]$, $c_1=(k,\ell)\in\mathbb D_1$
has the form
  \[[c_1]=\begin{cases}
  U_{k,\ell}\cap U\subsetneq U_{k,\ell}&\text{ if $(k,\ell)\in\mathbb D_1\setminus \mathbb D_2$;}\\
  U_{k,\ell}&\text{ if $(k,\ell)\in\mathbb D_2$;}
  \end{cases}\]
\item[(H3)]  
$\bigcup\{[c_1]\colon c_1\in\mathbb D_1\}=U\setminus\{0\}$, 
see \textsc{Figure}~\ref{fig2};
\item[(H4)] if $n\geq2$ then for every
$(c_1,\ldots,c_n)\in\mathbb D_1^n$
we have $H([c_1,\ldots,c_n])\subset [c_2,\ldots,c_n]$, and the equality holds if $(c_1,\ldots,c_n)\in\mathbb D_2^n$.
\end{itemize}

Direct calculations show that $\mathbb D_1\setminus\mathbb D_2$ consists of the following sixteen elements:
$(-1,1)$, $(-1,-2)$, $(0,-2)$, $(1,-2)$, $(1,-1)$, $(2,-1)$, $(2,0)$, $(2,1)$, $(1,1)$, $(1,2)$, $(0,2)$, $(-1,2)$, $(-1,1)$, $(-2,1)$, $(-2,0)$,
$(-2,-1)$, see \textsc{Figure}~\ref{fig2}.
The union of all the corresponding $1$-cylinders contains a neighborhood of $\partial U$. This and (H3) together imply 
 \begin{itemize}\item[(H5)] $\overline{U_{k,\ell}}\subset U$ if and only if $(k,\ell)\in\mathbb D_2$. 
\end{itemize}

\begin{figure}
\begin{center}
\includegraphics[height=7.5cm,width=8cm]{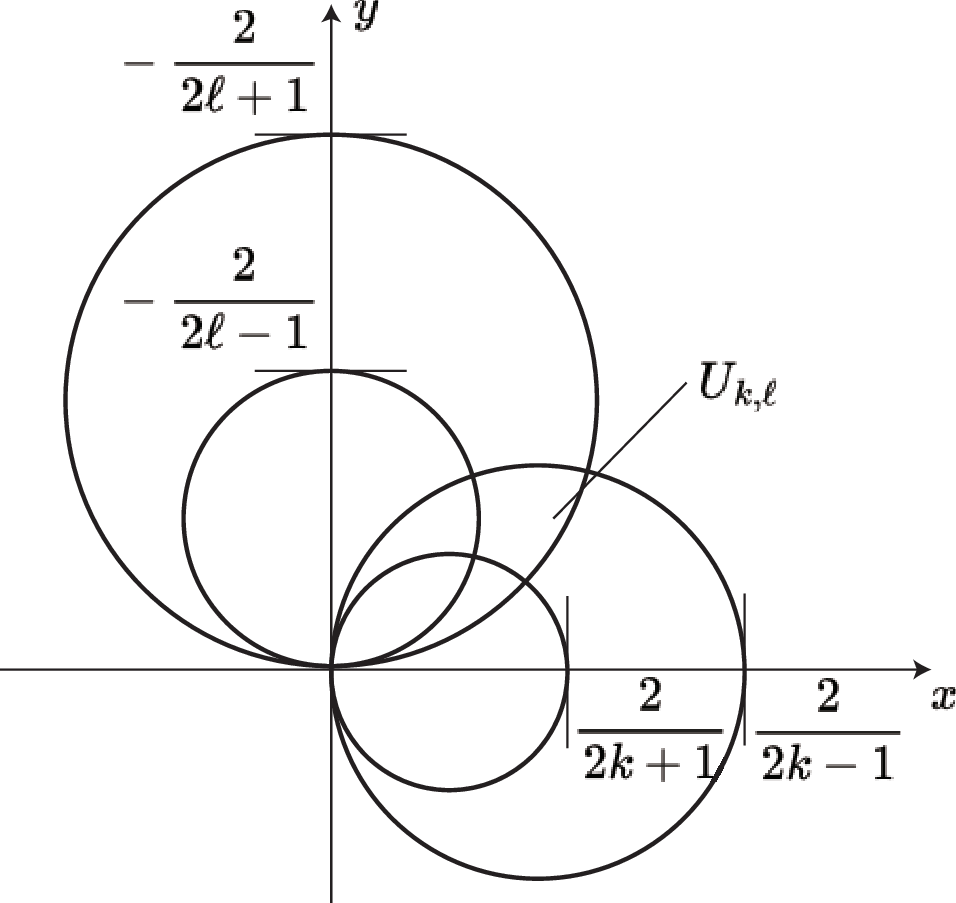}
\caption
{The domain $U_{k,\ell}$ ($(k,\ell)\in\mathbb D_1$) is bordered by the four circles through the origin, orthogonally intersecting each other.  }\label{fig1}
\end{center}
\end{figure}

 For each $(k,\ell)\in\mathbb D_2$ 
 define $\phi_{k,\ell}\colon  \overline{U}\to\mathbb C$ by
\begin{equation}\label{formula}\phi_{k,\ell}(z)= \frac{1}{z+k+\sqrt{-1}\ell}.\end{equation} 
Note that $\phi_{k,\ell}|_{U}$ is univalent, $\phi_{k,\ell}(U)=U_{k,\ell}$,
$(\phi_{k,\ell}|_U)^{-1}=H|_{U_{k,\ell}}$  and $\phi_{k,\ell}(\overline{U})=\overline{\phi_{k,\ell}(U)}=\overline{U_{k,\ell}}\subset \overline U$.
We extend $\phi_{k,\ell}$ univalently to the outside of $\overline{U}$. For $r>0$ let
 \[U(r)=\left\{ x+\sqrt{-1}y\in \mathbb C\colon -\frac{1}{2}-r< x<\frac{1}{2}+r,\ -\frac{1}{2}-r< y< \frac{1}{2}+r\right\}.\]
 \begin{lem}\label{extend}If $0<r_0<1/2$, then for every $(k,\ell)\in\mathbb D_2$  the map
 
\[\widetilde\phi_{k,\ell}\colon z\in U(r_0)\mapsto \frac{1}{z+k+\sqrt{-1}\ell}\in\mathbb C\]
is well-defined, univalent and satisfies
$\max_{z\in\overline{U}}|D \widetilde\phi_{ k,\ell}(z)|<2/3$.
 If
 $0<r\leq r_0$ then $\widetilde\phi_{k,\ell}(U(r))\subset U(r)$.

 \end{lem}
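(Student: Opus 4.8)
The plan is to treat $\widetilde\phi_{k,\ell}$ as the M\"obius transformation $z\mapsto (z+k+\sqrt{-1}\ell)^{-1}$ and to verify each assertion by elementary estimates on its pole, its derivative, and its effect on the real and imaginary parts of points of $U(r)$. First I would record the basic size estimate: for $z=x+\sqrt{-1}y\in U(r_0)$ one has $|z|\le(1/2+r_0)\sqrt{2}<\sqrt{2}$, whereas $|k+\sqrt{-1}\ell|=\sqrt{k^2+\ell^2}\ge\sqrt{8}=2\sqrt{2}$ because $(k,\ell)\in\mathbb D_2$. Hence $|z+k+\sqrt{-1}\ell|\ge 2\sqrt{2}-\sqrt{2}=\sqrt{2}>0$, so the pole $-(k+\sqrt{-1}\ell)$ lies outside $U(r_0)$ and $\widetilde\phi_{k,\ell}$ is well defined and holomorphic there. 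Univalence is then immediate, since $\widetilde\phi_{k,\ell}$ is the composition of the translation $z\mapsto z+k+\sqrt{-1}\ell$ with the inversion $w\mapsto 1/w$, each injective away from the pole.

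For the derivative bound I would compute $D\widetilde\phi_{k,\ell}(z)=-(z+k+\sqrt{-1}\ell)^{-2}$, so that $|D\widetilde\phi_{k,\ell}(z)|=|z+k+\sqrt{-1}\ell|^{-2}$. For $z\in\overline U$ we have $|z|\le 1/\sqrt{2}$, whence $|z+k+\sqrt{-1}\ell|\ge 2\sqrt{2}-1/\sqrt{2}=3/\sqrt{2}$ and therefore $|D\widetilde\phi_{k,\ell}(z)|\le 2/9<2/3$, which gives the claimed uniform bound on $\overline U$.

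The heart of the matter is the invariance $\widetilde\phi_{k,\ell}(U(r))\subset U(r)$ for $0<r\le r_0$. Writing $w=\widetilde\phi_{k,\ell}(z)$, $a=x+k$ and $b=y+\ell$, I would use
\[\mathrm{Re}(w)=\frac{a}{a^2+b^2},\qquad \mathrm{Im}(w)=\frac{-b}{a^2+b^2},\]
and aim to prove the sharper inclusion into the square of half-side $1/2$, namely $|\mathrm{Re}(w)|<1/2$ and $|\mathrm{Im}(w)|<1/2$, which forces $w\in U(r)$ with room to spare. The two elementary inequalities $|a|/(a^2+b^2)\le 1/|a|$ and $|a|/(a^2+b^2)\le 1/(2|b|)$ (the latter from $a^2+b^2\ge 2|a||b|$) reduce everything to a short case analysis on the integer $k$, using $|x|,|y|<1/2+r<1$ and $k^2+\ell^2\ge 8$: if $|k|\ge 3$ then $|a|>5/2-r>2$ and the first inequality gives $|\mathrm{Re}(w)|<1/(5/2-r)<1/2$; if $|k|\le 2$ then $k^2+\ell^2\ge 8$ forces $|\ell|\ge 2$ (indeed $|\ell|\ge 3$ when $|k|\le 1$), so $|b|>3/2-r>1$ and the second inequality gives $|\mathrm{Re}(w)|<1/(3-2r)<1/2$. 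The estimate for $\mathrm{Im}(w)$ is identical after exchanging $(k,a)$ with $(\ell,b)$, which is legitimate since $\mathbb D_2$ is symmetric under $(k,\ell)\mapsto(\ell,k)$. Finally, since $U(r)$ contains the open square $\{|\mathrm{Re}|<1/2,\ |\mathrm{Im}|<1/2\}$, we conclude $w\in U(r)$.

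The step I expect to be the main obstacle is precisely this last invariance. The tempting shortcut---bounding $|w|\le 1/|z+k+\sqrt{-1}\ell|<1/\sqrt{2}$ and comparing with the inscribed disc of $U(r)$ of radius $1/2+r$---fails for small $r$, since $1/\sqrt{2}>1/2+r$ once $r<(\sqrt2-1)/2$. Working componentwise and exploiting the constraint $k^2+\ell^2\ge 8$ is what makes the argument go through uniformly in $r\in(0,r_0]$; the boundary case $(k,\ell)=(\pm2,\pm2)$, evaluated at the corner of $U(r)$ nearest the pole, shows that $\mathrm{Re}(w)\to 1/2$ as $r\to 1/2$, so the value $\sqrt{8}$ (equivalently the requirement $r_0<1/2$) is sharp.
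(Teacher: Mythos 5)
Your proof is correct, and on the crucial invariance step it takes a genuinely different route from the paper. The well-definedness, univalence, and derivative estimates coincide with the paper's (both arguments in fact yield the bound $2/9<2/3$ on $\overline U$). For the inclusion $\widetilde\phi_{k,\ell}(U(r))\subset U(r)$, the paper argues perturbatively: it first invokes the previously established inclusion $\phi_{k,\ell}(\overline U)=\overline{U_{k,\ell}}\subset \overline U$ (which rests on the cylinder geometry, property (H5)), then, given $z\in U(r)$, picks $z'\in\overline U$ with $|z-z'|<r$ and shows $|\widetilde\phi_{k,\ell}(z)-\phi_{k,\ell}(z')|<r$ because the product $|z+k+\sqrt{-1}\ell|\,|z'+k+\sqrt{-1}\ell|$ is at least $1$; membership in $U(r)$ follows since $\phi_{k,\ell}(z')\in\overline U$. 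You instead prove, via the componentwise inequalities $|a|/(a^2+b^2)\le\min\{1/|a|,\,1/(2|b|)\}$ and the case split $|k|\ge 3$ versus $|k|\le 2$ (where $k^2+\ell^2\ge 8$ forces $|\ell|\ge 2$), the strictly stronger statement that $\widetilde\phi_{k,\ell}(U(r_0))$ lands in the open square $\{|\mathrm{Re}|<1/2,\ |\mathrm{Im}|<1/2\}$, which gives $\widetilde\phi_{k,\ell}(U(r))\subset U(r)$ for every $r>0$, not merely $r\le r_0$. The paper's argument buys brevity and portability---it works for any family satisfying $\phi(\overline U)\subset\overline U$ with Lipschitz constant at most $1$ on the enlarged domain, reusing (H5)---while yours buys self-containedness (no appeal to the cylinder structure), a sharper conclusion, and a sharpness analysis correctly locating the extremal case $(k,\ell)=(\pm2,\pm2)$ at the corner of $U(r)$, which explains why $r_0<1/2$ cannot be relaxed. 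Incidentally, your computation also sidesteps a minor inaccuracy in the paper's intermediate display: the bound $|z+k+\sqrt{-1}\ell|\ge\sqrt{(|k|-1)^2+(|\ell|-1)^2}$ fails when $k=0$ or $\ell=0$ (e.g.\ $(k,\ell)=(0,3)$ with $z$ near $-\sqrt{-1}$), although the weaker product bound $\ge 1$ that the paper actually uses remains valid; and your observation that the naive estimate $|w|<1/\sqrt{2}$ is insufficient for small $r$ correctly identifies why a componentwise (or perturbative) argument is genuinely needed.
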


\begin{figure}
\begin{center}
\includegraphics[height=10cm,width=11cm]{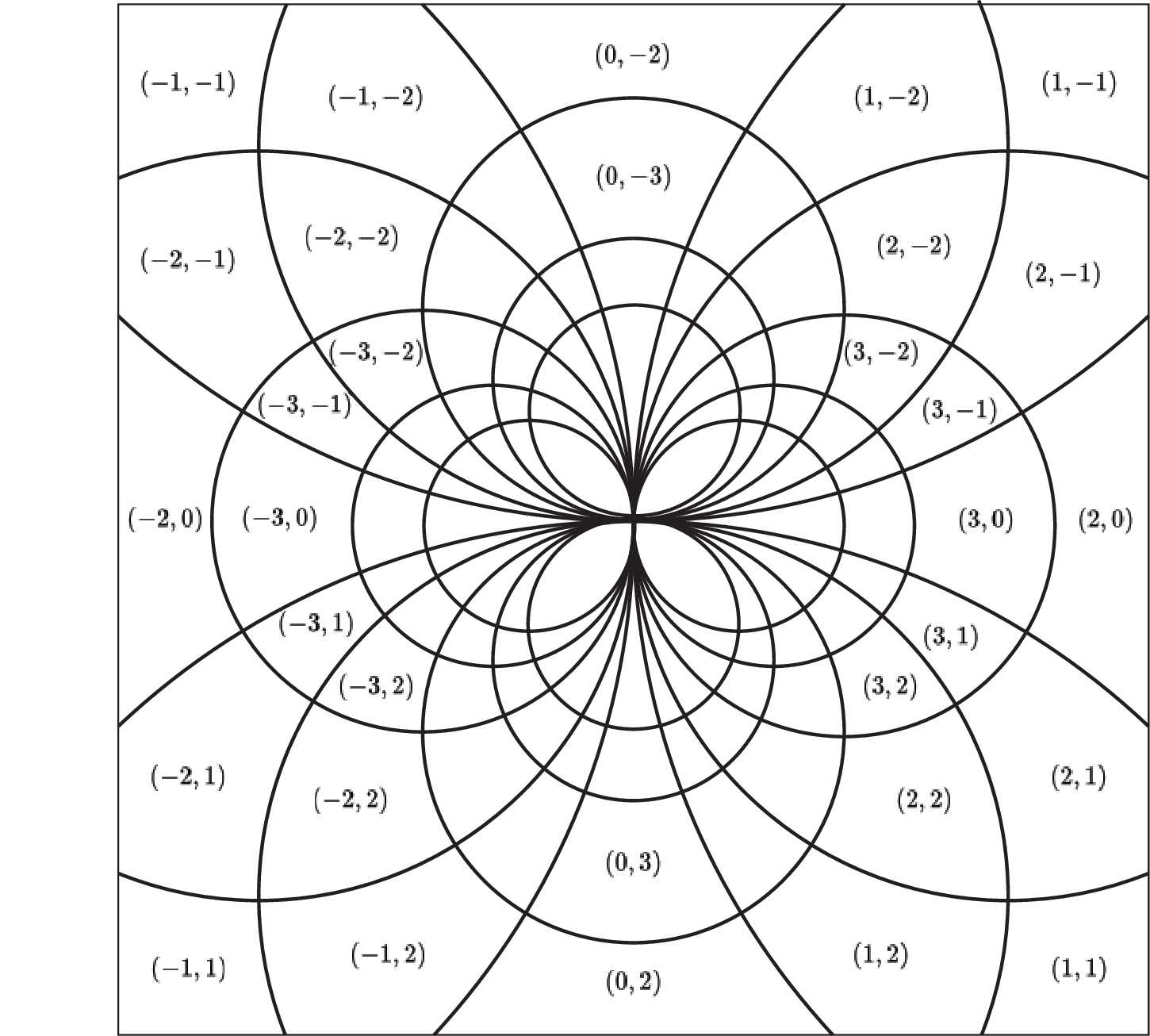}
\caption
{The collection $\{[c_1]\colon c_1=(k,\ell)\in\mathbb D_1 \}$ of $1$-cylinders tessellates $U$. }\label{fig2}
\end{center}
\end{figure}
 
 \begin{proof}
 Since $(k,\ell)\in\mathbb D_2$,
for any $z\in U(r_0)$ we have $z+k+\sqrt{-1}\ell\neq0.$ Hence   $\widetilde\phi_{ k,\ell}$ is well-defined and univalent. For any $z\in\overline{U}$ we have
\[|D \widetilde\phi_{ k,\ell}(z)|=\frac{1}{|z+k+\sqrt{-1}\ell|^2}\leq\frac{1}{(k-1/2)^2+(\ell-1/2)^2} < \frac{2}{3}.\]
We also have $|z+k+\sqrt{-1}\ell|\geq\sqrt{(|k|-1)^2+(|\ell|-1)^2}$ for all $z\in U(r_0)$.
If $0<r\leq r_0$, then for any $z\in U(r)$ there exists $z'\in\overline{U}$ with $|z-z'|<r$.
  Since $(|k|-1)^2+(|\ell|-1)^2\geq1$ we have
 \[\begin{split}|\widetilde\phi_{k,\ell}(z)-\phi_{k,\ell}(z')|&=\left|\frac{z'-z}{(z+k+\sqrt{-1}\ell)(z'+k+\sqrt{-1}\ell)}\right|\\
 &\leq\frac{|z'-z|}{(|k|-1)^2+(|\ell|-1)^2}<r.\end{split}\]
 Together with $\phi_{k,\ell}(z)\in\phi_{k,\ell}(\overline{U})\subset \overline{U}$ we obtain
 $\widetilde\phi_{k,\ell}(U(r))\subset U(r)$.\end{proof}
We fix $r_0\in(0,1/2)$ and set $\widetilde U=U(r_0/2)$. By Lemma~\ref{extend},
 for every $(k,\ell)\in \mathbb D_2$ the analytic extension of $\phi_{k,\ell}$ to $U(r_0)$ exists, which we denote by $\widetilde\phi_{k,\ell}$, and satisfies 
  \begin{equation}\label{inc-eq}\widetilde{\phi}_{k,\ell}(U(r_0) )\subset U(r_0)\ \text{ and }  \ \widetilde{\phi}_{k,\ell}(\widetilde U)\subset\widetilde U.\end{equation}


\begin{lem}\label{IFS-Con}The collection $\{\phi_{k,\ell}\colon(k,\ell)\in\mathbb D_2\}$ is a $2$-decaying conformal IFS on $\overline{U}$. \end{lem}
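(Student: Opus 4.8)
The plan is to verify directly the four defining conditions (A1)--(A4) of a conformal IFS together with the two-sided derivative estimate \eqref{CF-der}, taking $\Delta=\overline U$ and $I=\mathbb D_2$. The set $\overline U$ is a closed square, hence connected, compact, convex, and equal to the closure of its interior, so the standing hypotheses on $\Delta$ are met. Almost all of the work has been prepared by properties (H1)--(H5) and by Lemma~\ref{extend}; the only point requiring genuine geometric attention is (A4).

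First I would dispatch (A1)--(A3). For (A1), since $\phi_{k,\ell}$ is a homeomorphism, $\phi_{k,\ell}({\rm int}\,\overline U)$ is the interior of $\phi_{k,\ell}(\overline U)=\overline{U_{k,\ell}}$, hence is contained in $U_{k,\ell}$; the open set condition then follows from the disjointness (H1) of the domains $U_{k,\ell}$. For (A2), I would take $\widetilde\Delta=\widetilde U=U(r_0/2)$: Lemma~\ref{extend} provides the univalent holomorphic extension $\widetilde\phi_{k,\ell}$ of $\phi_{k,\ell}$ to $U(r_0)\supset\widetilde U$, and \eqref{inc-eq} gives $\widetilde\phi_{k,\ell}(\widetilde U)\subset\widetilde U$; being holomorphic and injective with nonvanishing derivative, each $\widetilde\phi_{k,\ell}$ is a $C^1$ conformal diffeomorphism onto its image. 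For (A3), Lemma~\ref{extend} already yields $\max_{z\in\overline U}|D\widetilde\phi_{k,\ell}(z)|<2/3$ uniformly in $(k,\ell)\in\mathbb D_2$, so (A3) holds with $m=1$ and $\gamma=2/3$, the positivity of the derivative being automatic from conformality.

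Next I would establish \eqref{CF-der}. Writing $i=(k,\ell)$ and differentiating \eqref{formula} gives $|D\phi_{k,\ell}(z)|=|z+k+\sqrt{-1}\ell|^{-2}$. For $z\in\overline U$ we have $|z|\le 1/\sqrt2$, while $|i|\ge 2\sqrt2$, so $|z|/|i|\le 1/4$ and the triangle inequality gives $(3/4)|i|\le|z+k+\sqrt{-1}\ell|\le(5/4)|i|$. Hence $(4/5)^2|i|^{-2}\le|D\phi_{k,\ell}(z)|\le(4/3)^2|i|^{-2}$, which is \eqref{CF-der} with $C_1=16/25$ and $C_2=16/9$.

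The main obstacle is (A4), the countability of $\bigcup_{(k,\ell)\in\mathbb D_2}(\phi_{k,\ell}(\overline U)\cap\partial\overline U)$, where $\phi_{k,\ell}(\overline U)=\overline{U_{k,\ell}}$. Here I would argue that each individual intersection $\overline{U_{k,\ell}}\cap\partial\overline U$ is in fact finite. By (H5), $\overline{U_{k,\ell}}\subset U$; consequently any interior point of $\overline{U_{k,\ell}}$ has a full neighborhood contained in $\overline U$ and therefore cannot lie on $\partial\overline U$. Thus $\overline{U_{k,\ell}}\cap\partial\overline U\subset\partial U_{k,\ell}$, and $\partial U_{k,\ell}$ consists of four circular arcs (the arcs through the origin described after the definition of $U_{k,\ell}$). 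Since a genuine circle meets any one of the four straight edges of $\partial\overline U$ in at most two points, $\partial U_{k,\ell}\cap\partial\overline U$ is finite, and the union over the countable index set $\mathbb D_2$ is countable. This verifies (A4) and completes the verification that $\{\phi_{k,\ell}\colon(k,\ell)\in\mathbb D_2\}$ is a $2$-decaying conformal IFS on $\overline U$.
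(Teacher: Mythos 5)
Your verification is correct and follows the same overall scheme as the paper's proof: (A1) from (H1), (A2) from the second inclusion in \eqref{inc-eq}, (A3) from the uniform bound $2/3$ of Lemma~\ref{extend} with $m=1$ and $\gamma=2/3$, and \eqref{CF-der} from the explicit formula \eqref{formula} --- your constants $C_1=16/25$, $C_2=16/9$ are a correct instantiation of what the paper leaves implicit in the phrase ``by virtue of the formula \eqref{formula}''. The one place where you genuinely deviate is (A4). The paper asserts that (H5) ``immediately yields (A4)''; strictly speaking, $\overline{U_{k,\ell}}\subset U$ alone does not force $\overline{U_{k,\ell}}\cap\partial\overline U=\emptyset$, since the half-open square $U$ contains two edges of $\partial\overline U$; the intended justification is the observation made just before (H5) that the cylinders indexed by $\mathbb D_1\setminus\mathbb D_2$ cover a whole neighborhood of $\partial U$, so the domains $U_{k,\ell}$ with $(k,\ell)\in\mathbb D_2$ stay uniformly away from the boundary and every intersection in (A4) is actually empty. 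Your argument instead uses only the literal statement of (H5) to reduce to $\partial U_{k,\ell}\cap\partial\overline U$ and then bounds this set by finitely many points, since each of the four circles carrying $\partial U_{k,\ell}$ meets each straight edge of $\partial\overline U$ in at most two points. This conclusion is weaker (finite rather than empty intersections) but entirely sufficient, because (A4) only demands countability, and it has the merit of being self-contained. One cosmetic remark on (A1): your identification of $\phi_{k,\ell}({\rm int}\,\overline U)$ with the interior of $\overline{U_{k,\ell}}$ deserves a word (openness of the image follows from holomorphy, or invariance of domain), but in fact you only need the inclusion $\phi_{k,\ell}({\rm int}\,\overline U)\subset U_{k,\ell}$, which is immediate from ${\rm int}\,\overline U\subset U$ and $\phi_{k,\ell}(U)=U_{k,\ell}$, exactly as in the paper.
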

\begin{proof}
For any pair $(k_1,\ell_1)$, $(k_2,\ell_2)$ of elements of $\mathbb D_2$ we have  $\phi_{k_1,\ell_1}(U)\cap \phi_{k_2,\ell_2}(U)=U_{k_1,\ell_1}\cap U_{k_2,\ell_2}$. Hence (H1) implies (A1). Condition (A2) follows from the second inclusion in \eqref{inc-eq}.
By Lemma~\ref{extend}, for every $(k,\ell)\in\mathbb D_2$ we have
$\|D \widetilde\phi_{ k,\ell}\|_{\overline{U}}< 2/3<1$,
 and so (A3) holds. Hence $\{\phi_{k,\ell}\colon(k,\ell)\in\mathbb D_2\}$ is an infinite conformal IFS on $\overline{U}$. It is a $2$-decaying IFS 
 by virtue of the formula \eqref{formula} and (H5) that immediately yields (A4).\end{proof}


\begin{lem}\label{cylind-lem}For every $n\in\mathbb N$ and every $(c_1,\ldots,c_n)\in \mathbb D_2^n$, we have 
\[[c_1,\ldots, c_n]=\phi_{c_1\cdots c_n}(U).\]\end{lem}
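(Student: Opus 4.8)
The plan is to prove the identity $[c_1,\ldots,c_n]=\phi_{c_1\cdots c_n}(U)$ by induction on $n$, using the relationship $(\phi_{k,\ell}|_U)^{-1}=H|_{U_{k,\ell}}$ between the IFS maps and the Hurwitz map together with the cylinder properties (H2) and (H4). First I would establish the base case $n=1$: for $(k,\ell)\in\mathbb D_2$, property (H2) gives $[(k,\ell)]=U_{k,\ell}$, and since $\phi_{k,\ell}(U)=U_{k,\ell}$ by the construction in \eqref{formula}, the claim holds immediately.

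For the inductive step I would assume the identity for strings of length $n-1$ and prove it for length $n$. The key observation is that the Hurwitz map conjugates the cylinder structure into the IFS structure. Concretely, for $(c_1,\ldots,c_n)\in\mathbb D_2^n$ with $c_1=(k,\ell)$, I would argue that $z\in[c_1,\ldots,c_n]$ if and only if $c_1(z)=(k,\ell)$ and $H(z)\in[c_2,\ldots,c_n]$. The first condition means $z\in[c_1]=U_{k,\ell}$ by (H2), and on $U_{k,\ell}$ the Hurwitz map satisfies $H|_{U_{k,\ell}}=(\phi_{k,\ell}|_U)^{-1}$, so $H(z)=\phi_{k,\ell}^{-1}(z)$. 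Thus $z\in[c_1,\ldots,c_n]$ precisely when $z\in U_{k,\ell}=\phi_{k,\ell}(U)$ and $\phi_{k,\ell}^{-1}(z)\in[c_2,\ldots,c_n]$, which is equivalent to $z\in\phi_{k,\ell}([c_2,\ldots,c_n])$. Applying the inductive hypothesis $[c_2,\ldots,c_n]=\phi_{c_2\cdots c_n}(U)$ then yields $z\in\phi_{k,\ell}(\phi_{c_2\cdots c_n}(U))=\phi_{c_1\cdots c_n}(U)$, as desired.

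The main obstacle, and the point requiring care, is the precise justification that $H([c_1,\ldots,c_n])=[c_2,\ldots,c_n]$ rather than merely an inclusion. Property (H4) asserts the inclusion $H([c_1,\ldots,c_n])\subset[c_2,\ldots,c_n]$ in general, with \emph{equality} guaranteed only when $(c_1,\ldots,c_n)\in\mathbb D_2^n$; this is exactly the regime of the lemma. The equality in (H4) hinges on the fact that for $c_1\in\mathbb D_2$ we have the full cylinder $[c_1]=U_{k,\ell}$ with $\overline{U_{k,\ell}}\subset U$ by (H5), so that $H$ maps $[c_1]$ \emph{onto} all of $U$ and hence maps $[c_1,\ldots,c_n]$ onto $[c_2,\ldots,c_n]$ without loss at the boundary. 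I would therefore invoke (H4) for the equality and check that the restriction of the IFS maps to $\mathbb D_2$ (which excludes precisely the sixteen problematic $1$-cylinders in $\mathbb D_1\setminus\mathbb D_2$) is what makes the Markov property exact. Once this surjectivity is in hand, the bijective correspondence $z\leftrightarrow\phi_{k,\ell}^{-1}(z)$ between $[c_1,\ldots,c_n]$ and $[c_2,\ldots,c_n]$ closes the induction cleanly.
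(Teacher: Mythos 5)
Your proof is correct and follows essentially the same route as the paper: the paper also derives $[c_1,\ldots,c_n]=\phi_{c_1}([c_2,\ldots,c_n])$ from the equality case of (H4) for $\mathbb D_2^n$-strings and unfolds it recursively (your induction is just this recursion made explicit), closing with the base identity $[c_1]=\phi_{c_1}(U)$ from (H2). One cosmetic remark: the surjectivity $H([c_1])=U$ for $c_1=(k,\ell)\in\mathbb D_2$ comes directly from $[c_1]=U_{k,\ell}=\phi_{k,\ell}(U)$ and $(\phi_{k,\ell}|_U)^{-1}=H|_{U_{k,\ell}}$, rather than from (H5), but since the paper states the needed equality within (H4) itself, this does not affect the argument.
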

\begin{proof}From (H4), if $n\geq2$ and $(c_1,\ldots,c_n)\in \mathbb D_2^n$ then
$[c_1,\ldots,c_n]=\phi_{c_1}([c_2,\ldots,c_n])$. A recursive use of this equality yields
$
[c_1,\ldots,c_n]=\phi_{c_1c_2}([c_3,\ldots,c_n])=\cdots=\phi_{c_1\cdots c_{n-1}}([c_n])$.
The last expression equals $\phi_{c_1\cdots c_n}(U)$ because
$[c_1]=\phi_{c_1}(U)$ holds
for every $c_1\in  \mathbb D_2$.
\end{proof}

\subsection{Proofs of Theorems~\ref{Main1} and \ref{Main2}}\label{mainpf}
Let $\Phi=\{\phi_{k,\ell}\colon(k,\ell)\in\mathbb D_2\}$ denote the 
$2$-decaying conformal IFS on $\overline{U}$ associated with the Hurwitz continued fraction as in $\S$\ref{basic-def}.
Let $S$ be an infinite subset of $\mathbb Z(\sqrt{-1})$.
Lemma~\ref{cylind-lem} implies
$F(S)\subset \bigcup_{n=0 }^\infty H^{-n}(F_\Phi(S))$.
Since the Hurwitz map $H$ in \eqref{H-map} is piecewise Lipschitz and Hausdorff dimensions do not change under the action of bi-Lipschitz homeomorphisms, we have
 \[\dim_{\rm H}F(S)\leq\dim_{\rm H} F_\Phi(S).\]
 
 Let $f\colon\mathbb{N}\rightarrow [\min |S|, \infty)$ satisfy $\lim_{n\to \infty}f(n)=\infty$. 
Lemma~\ref{cylind-lem} also implies that $F(S,f)$ contains $F_\Phi(S,f)$, and so
 \[\dim_{\rm H}F(S,f)\geq\dim_{\rm H}F_\Phi(S,f).\] Combining these two inequalities and then using Theorem~\ref{Main100},
 we obtain
 $\dim_{\rm H}F(S)=\dim_{\rm H}F(S,f)=\tau(|S|)/2$ as required in Theorems~\ref{Main1} and \ref{Main2}. \qed

\subsection{Proof of Theorem~\ref{Main-cor}}\label{pfthmcor}
By Theorem~\ref{Main1}, the set
\[\left\{z\in U\setminus\mathbb Q(\sqrt{-1})\colon \lim_{n\to\infty}|c_n(z)|=\infty \right\}\]
is of Hausdorff dimension $\tau(|\mathbb Z(\sqrt{-1})|)/2$.
In order to show 
$\tau(|\mathbb Z(\sqrt{-1})|)=2$,
we use the following formula for convergence exponent
(see \cite[Section~2]{PS72}):
if $S=\{x_n\colon n\in\mathbb N\}$ is a set of positive reals with $|x_n|\leq |x_{n+1}|$ for every $n\in\mathbb N$ and $x_n\to\infty$ as $n\to\infty$, then
\begin{equation}\label{formula-tau}\tau(|S|)=\limsup_{n\to \infty}\frac{\log n}{\log x_n}.\end{equation}

Write $\mathbb Z(\sqrt{-1})=\{z_n\colon n\in\mathbb N\}$, $|z_1|\leq |z_2|\leq \cdots$.  
Let $n,N\in\mathbb N$ satisfy $n\geq10$ and \begin{equation}\label{count-eq2}
(2N+1)^2< n \leq (2N+2)^2.\end{equation} 
Since the number of Gaussian integers contained in the square $[-N,N]^2\subset\mathbb R^2=\mathbb C$ is $(2N+1)^2$,
the first inequality in \eqref{count-eq2} implies $z_n\notin[-N,N]^2$, and so
\begin{equation}\label{count-eq3}N<|z_n|.\end{equation}
Since the number of Gaussian integers contained in  $[-N-1,N+1]^2$ is $(2N+3)^2$,
  The second inequality in \eqref{count-eq2} implies $z_n\in[-N-1,N+1]^2$, and so
 \begin{equation}\label{count-eq4}|z_n|\leq \sqrt{2}(N+1).\end{equation}
From  \eqref{count-eq3} and \eqref{count-eq4} it follows that for any $\varepsilon>0$ there exists $n(\varepsilon)\geq1$ such that for every $n\geq n(\varepsilon)$ we have
  $n^{\frac{1}{2}-\varepsilon}\leq |z_n|\leq n^{\frac{1}{2}+\varepsilon}.$
From this estimate and \eqref{formula-tau} we obtain
\[\tau(|\mathbb Z(\sqrt{-1})|)=\limsup_{n\to\infty}\frac{\log n}{\log |z_{n}|}=2,\]
as required.
\qed

\subsection*{Acknowledgments}
We thank Gerardo Gonz\'alez Robert for fruitful discussions.
This research was supported by the JSPS KAKENHI 23K20220 and 25K17282.

\end{document}